\newtheorem{theorem}{Theorem}[section]
\newtheorem{thm}{Theorem}[section]
\newtheorem{lemma}[theorem]{Lemma}
\newtheorem{lem}[theorem]{Lemma}
\newtheorem{cor}[theorem]{Corollary}
\newtheorem{prop}[theorem]{Proposition}
\theoremstyle{definition}
\theoremstyle{remark}
\newtheorem{remark}[theorem]{Remark}
\newtheorem{rem}[theorem]{Remark}
\numberwithin{equation}{section}
\newcommand{\xcomp}{C_{c}(X)}
\newcommand{\lw}{{\ell^{2}_{\mu}}(X)}
\newcommand{\lwi}{{\ell^{\infty}}(X)}
\newcommand{\dom}{\operatorname{D}}
\newcommand{\ZZ}{\mathbb{Z}}
\title[Discrete Schr\"odinger operators]{On two properties of positively perturbed discrete Schr\"odinger operators}
\author{Ognjen Milatovic}
\address{Department of Mathematics
and Statistics \\ University of North Florida \\ Jacksonville, FL
32224 \\ USA.}
\email{omilatov@unf.edu}
\subjclass[2010]{35J10, 39A12, 47B25}
\begin{document}
\maketitle

\begin{abstract} We show that if we start from a symmetric lower semi-bounded Schr\"odinger operator $\mathcal{H}$ on finitely supported functions on a discrete weighted graph (satisfying certain conditions), apply the Friedrichs construction to get a self-adjoint extension $H$, and then perturb $H$ by a non-negative function $W$, then the resulting form-sum $H\widetilde{+}W$ coincides with the Friedrichs extension of $\mathcal{H}+W$. Additionally, we consider a non-negative perturbation of an essentially self-adjoint lower semi-bounded Schr\"odinger operator $H$ on a discrete weighted graph. We show that, under certain conditions on the graph and the perturbation, the essential self-adjointness of $H$ remains stable under the given perturbation.
\end{abstract}

\section{Introduction}\label{S:main} About fifteen years ago, in their seminal papers~\cite{Keller-Lenz-10, Keller-Lenz-09,W-08}, the authors established a convenient framework for analysis on (discrete) weighted graphs, thereby propelling the study of existence and uniqueness of  self-adjoint extensions of the (discrete) Laplacian and, more generally, Schr\"odinger operators. Over the ensuing years, several researchers have worked on this (and related) question(s), resulting in quite a few articles; see, for instance~\cite{vtt-11-2,vtt-11-3,Gol,HKMW,Jor-08, Keller-Lenz-10,Keller-Lenz-09,KN-22,Milatovic-Truc,MS-18,Torki-10}. With regard to the self-adjointness of (primarily lower semi-bounded) magnetic Schr\"odinger operators (acting on vector bundles) over infinite (not necessarily locally finite) graphs, the most general results thus far are contained in~\cite{MS-18} and, for (lower semi-bounded) Schr\"odinger operators acting on scalar functions on discrete (as well as metric) graphs, in the paper~\cite{KN-22}. Interested readers can find additional references on the self-adjointness of discrete Laplace/Schr\"odinger operators on graphs in the papers~\cite{Keller-15,KN-22,KN-23,MS-18} and the monographs~\cite{KLW-book,KN-22-book}.

In parallel with the aforementioned developments, motivated by~\cite{Masamune-09}, some authors have studied Laplace-type operators acting on 1-forms; see, for instance,~\cite{AT-15,BGJ-15}.  Subsequently, the essential self-adjointness of the Laplacian was studied on a 2-simplicial complex in~\cite{CY-17}, and on a triangulation (in presence of a magnetic potential) in~\cite{AACTH-23}. Furthermore, in the recent paper~\cite{IKMW-24}, the authors investigated the self-adjointness (and other properties) of the weighted Laplacian on birth-death chains. Interesting connections between essential self-adjointness and other properties (such as $L^2$-Liouville property) of the Laplacian on graphs (and manifolds) were explored in~\cite{HMW-21}. Additionally, far-reaching studies of (quadratic) form extensions (with applications to graphs) were conducted by the authors of~\cite{LSW-17,LSW-16}. Lastly, the authors of~\cite{ABTH-19} and~\cite{ABTH-20}, studied various properties of non-self-adjoint operators.

In the present paper, we address two phenomena exhibited by positive perturbations of lower semi-bounded Schr\"odinger operators acting on weighted  graphs.  The origin of the first phenomenon lies a problem discussed in section VI.5.2 of the book~\cite{Kato80}. More specifically, let $t_1$ and $t_2$ be two closed and lower semi-bounded forms in a Hilbert space $\mathscr{H}$, with the associated (self-adjoint and lower semi-bounded) operators $T_1$ and $T_2$. To the form $t_1+t_2$, which is closed and lower semi-bounded, we can associate a (self-adjoint and lower semi-bounded) operator $T_1\widetilde{+}T_2$, called the form-sum of $T_1$ and $T_2$. On the other hand, if the operator sum $T_1+T_2$ happens to be densely defined, we can certainly get the Friedrichs extension $T_{F}$ of $T_1+T_2$.  As demonstrated in example VI.2.19 of~\cite{Kato80}, the form-sum $T_1\widetilde{+}T_2$ generally differs from $T_{F}$. Motivated by this  discussion, we consider the question of coincidence of the following two operators: (i)  the form-sum $L_{V_1}\widetilde{+}M_{V_2}$, where  $M_{V_2}$ is the  maximal multiplication operator by a \emph{non-negative} function $V_2$ (here, $M_{V_2}$ acts in the usual $\lw$-space, where $X$ is a discrete (weighted) graph with vertex weight $\mu$), and $L_{V_1}$ is the Friedrichs extension of a discrete lower semi-bounded (symmetric) Schr\"odinger operator $\mathcal{L}_{V_1}$ (with real-valued potential $V_1$) defined initially on $\xcomp$, the space of finitely supported functions on $X$; (ii) the Friedrichs extension ${L}_{V_1+V_2}$ of the Schr\"odinger operator $\mathcal{L}_{V_1+V_2}|_{\xcomp}$ with potential $V_1+V_2$. In theorem~\ref{T:main-1} we show that the operators (i) and (ii) coincide in the context of weighted graphs satisfying the so-called \emph{finiteness condition}: for all $x\in X$ the mapping $y\mapsto b(x,y)/\mu(y)$ belongs to $\lw$, where $b(x,y)$ stands for the edge weight; see sections~\ref{SS:setting} and~\ref{SS:expressions} for more details regarding the notations. For Schr\"odinger operators in $L^2({\mathbb{R}}^n)$, this phenomenon was first discovered by the author of~\cite{cyc-81}. Later, the author of~\cite{M-04} observed that this property carries to Schr\"odinger operators in $L^2(X)$, where $X$ is a non-compact Riemannian manifold.

We now move on to describe the second phenomenon. If we start with a lower semi-bounded and essentially self-adjoint Schr\"odinger operator $\mathcal{L}_{V_1}|_{\xcomp}$ and perturb it by a \emph{non-negative} potential $V_2$ which is $\mathcal{L}_{V_1}$-bounded (that is, dominated by $\mathcal{L}_{V_1}$ in the operator sense; see inequality~(\ref{E:a-1-2}) for a precise description) with bound $\alpha\geq 0$, then  the operator $\mathcal{L}_{V_1+V_2}|_{\xcomp}$ will be essentially self-adjoint. As $\alpha$ is not assumed to satisfy $0\leq\alpha<1$ (or $\alpha=1$), we cannot use the celebrated Kato--Rellich theorem (or W\"ust's theorem) to infer the essential self-adjointness of $\mathcal{L}_{V_1+V_2}|_{\xcomp}$. In theorem~\ref{T:main-2} we prove that this kind of stability under a positive perturbation holds in the setting of weighted graphs satisfying the finiteness condition (in the sense of the preceding paragraph). For Schr\"odinger operators acting in $L^2({\mathbb{R}}^n)$, this phenomenon was first discovered by the author of~\cite{Kap-85}. Later it was noted in~\cite{M-05} that the same phenomenon occurs in Schr\"odinger operators acting in $L^2(X)$, where $X$ is a non-compact Riemannian manifold.

With regard to theorem~\ref{T:main-2}, in the context of birth-death chains (that is, weighted graphs $(X,b,\mu)$ with vertex set $X=\{0,1,2,\dots\}$ and edge weights $b$ such that $b(x,y)>0$ if and only if $|x-y|=1$), a stronger result was established recently in proposition 4.7 of~\cite{IKMW-24}: If the weighted Laplacian $\mathcal{L}$ (as in~(\ref{E:magnetic-lap}) below with $V=0$) is essentially self-adjoint on $\xcomp$ and if $W\in C(X)$ satisfies $W(x)\geq K$ for all $x\in X$, where $K\in\mathbb{R}$, then $\mathcal{L}+W$ is essentially self-adjoint on $\xcomp$. Finally, in remark~\ref{R:rem-gk} below, we describe the relationship between our theorem~\ref{T:main-2} and the essential self-adjointness result of theorem 2.16(b) in~\cite{GKS-15} regarding lower semi-bounded Schr\"odinger operators on graphs $(X, b, \mu)$ satisfying metric completeness condition.

An important tool for demonstrating the two phenomena described above is the \emph{positivity-preserving property} for the resolvent of a (lower semi-bounded and self-adjoint) Schr\"odinger operator $H^{0}_{V_1}$ associated to the closure of a (closable) lower semi-bounded Schr\"odinger form with a potential $V_1$. In the setting of general weighted graphs (not necessarily satisfying the finiteness condition), this property (which we recalled for convenience in proposition~\ref{P:P-1} below) was established by the authors of~\cite{GKS-15} as a consequence of the (discrete) Feynman--Kac--It\^o formula, also proved in~\cite{GKS-15}. Later, the author of~\cite{MS-18} gave a purely analytic proof of the mentioned positivity-preserving property.   In the course of proving our theorem~\ref{T:main-1}, we establish the \emph{positive form core property} for the Friedrichs extension $L_{V_1}$ of a lower-semi bounded Schr\"odinger operator $\mathcal{L}_{V_1}|_{\xcomp}$. The latter property says that every \emph{non-negative} function $u$ belonging to the form-domain of $L_{V_1}$ can be approximated, in the form norm of $L_{V_1}$, by non-negative elements of $\xcomp$; see section~\ref{SS:pfc} for a precise description. The mentioned positive form core property (see proposition~\ref{P:P-2} below) may be of independent interest.  Lastly, our analysis is facilitated by Kato's inequality for discrete weighted Laplacian (as formulated in~\cite{MS-18}) and a discrete version of the Green's formula (as formulated in~\cite{HK-11}).

The article consists of five sections, including the introduction. In section~\ref{S:main}, after describing the setting, notations, and operators, we state the main results--theorem~\ref{T:main-1} and theorem~\ref{T:main-2}. In section~\ref{S:prelim} we collect some definitions and preliminary facts and prove the positive form core property. The last two sections are devoted to the proofs of theorem~\ref{T:main-1} and theorem~\ref{T:main-2}.

\section{Main Results}\label{S:main}  In this section, we describe weighted graphs, the corresponding function spaces, and discrete Schr\"odinger operators. At the end of the section we state the main theorems.
\subsection{Weighted Graphs and Basic Function Spaces}\label{SS:setting}
By a \emph{symmetric weighted graph} $(X,b)$ we mean a countable \emph{vertex set} $X\neq \emptyset$ together with an \emph{edge weight function} $b\colon X\times X\to[0,\infty)$ satisfying the following conditions:
\begin{enumerate}
  \item [(b1)] $b (x, y) = b (y, x)$, for all $x,\,y\in X$;
  \item [(b2)] $b(x,x)=0$, for all $x\in X$;
  \item [(b3)] $\sum_{z\in X} b(x,z)<\infty$.
\end{enumerate}
We say that the graph $(X,b)$ is \emph{locally finite} if for all $x\in X$ we have $\displaystyle\sharp\,\{y\in X\colon b(x,y)>0\}<\infty$, where $\sharp\, G$ is understood as the number of elements in the set $G$.

We say that the two vertices $x,\, y\in X$  are \emph{connected by an edge} and write $x\sim y$ if $b(x, y) > 0$.  By a \emph{finite path} we mean a finite sequence of vertices $x_0,\,x_1,\,\dots,x_n$ such that $x_{j}\sim x_{j+1}$ for all $0\leq j\leq n-1$. We call the graph $(X,b)$ \emph{connected} if every two vertices $x,\,y\in X$ can be joined by a finite path.

We endow the set $X$ with the discrete topology. The symbol $C(X)$ stands for the set of functions $f\colon X\to\mathbb{C}$, that is, continuous (complex-valued) functions with respect to the discrete topology. The symbol $C_{c}(X)$ indicates the set of finitely supported elements of $C(X)$.

By a \emph{weight} we mean a function $\mu\colon X\to (0,\infty)$. Using the formula
\[
\mu(S):=\displaystyle\sum_{x\in S}\mu(x), \qquad S\subseteq X,
\]
the function $\mu$ gives rise to a Radon measure of full support. In this article, we will use the two notions interchangeably.

Given a weight $\mu$, we define the anti-duality pairing $(\cdot,\cdot)_{a}\colon C(X)\times C_{c}(X)\to \mathbb{C}$ by
\begin{equation}\label{E:a-d}
(u,w)_{a}:=\displaystyle\sum_{x\in X}\mu(x)u(x)\overline{w(x)}.
\end{equation}
This pairing gives rise to an anti-linear isomorphism $w\mapsto (\cdot,w)_{a}$ between $C_{c}(X)$ and $C(X)'$. Here, $C(X)'$ is the continuous dual of $C(X)$, where the latter space is equipped with the topology of pointwise convergence.

The symbol $\lw$ stands for the space of functions $f\in C(X)$ such that
\begin{equation}\label{E:l-p-def}
\displaystyle\sum_{x\in X}\mu(x)|f(x)|^2<\infty,
\end{equation}
where $|\cdot|$ denotes the modulus of a complex number.

The space $\lw$ is a Hilbert space with the inner product
\begin{equation}\label{E:inner-w}
(f,g):=\sum_{x\in X}\mu(x)f(x)\overline{g(x)}.
\end{equation}
The inner product $(\cdot,\cdot)$ in $\lw$ induces the norm $\|\cdot\|$. Comparing with $(\cdot,\cdot)_{a}$, we see that if $f\in\lw$ and $g\in\xcomp$, then $(f,g)=(f,g)_{a}$.

\subsection{Finiteness Condition (FC)}\label{SS:FC}
Following the terminology of definition 3.16 in~\cite{MS-18}, we say that the triplet $(X,b,\mu)$ satisfies the \emph{finiteness condition}, abbreviated as (FC), if for all $x\in X$ the mapping $y\mapsto b(x,y)/\mu(y)$ belongs to $\lw$.

\subsection{Discrete Schr\"odinger Operators}\label{SS:expressions}
For a weighted graph $(X,b)$, a weight $\mu\colon X\to (0,\infty)$, and a real-valued function $V\in C(X)$ (called \emph{potential}), we define the \emph{formal discrete Schr\"odinger operator} $\mathcal{L}_{V}\colon \mathcal{F}\to C(X)$ as follows: the domain of $\mathcal{L}_{V}$ is described as
\begin{equation}\label{E:def-f}
\mathcal{F}:=\{f\in C(X)\colon \displaystyle\sum_{y\in X}b(x,y)|f(y)|<\infty\textrm{ for all } x\in X\},
\end{equation}
and the action of $\mathcal{L}_{V}$ is described as
\begin{equation}\label{E:magnetic-lap}
    \mathcal{L}_{V}f(x):=\frac{1}{\mu(x)}\sum_{y\in X}b(x,y)(f(x)-f(y))+V(x)f(x),\quad x\in X.
\end{equation}
(To make the notation shorter, we suppressed $\mu$ in the symbol $\mathcal{L}_{V}$.) The absolute convergence of the sum is guaranteed by the description of the domain $\mathcal{F}$. It turns out that $\mathcal{F}=C(X)$ if and only if $(X,b)$ is locally finite.

\subsection{Statements of the results} Throughout this subsection we assume that $(X,b,\mu)$ satisfies the finiteness condition (FC).
By lemma 3.15 in~\cite{MS-18}, the property (FC) is equivalent to the inclusion $\mathcal{L}_{V}[\xcomp]\subseteq\lw$. This means that, in this context, we can consider $\mathcal{L}_{V}|_{\xcomp}$ as an operator in $\lw$. Moreover, in view of Green's formula (see lemma~\ref{L:L-1} below), for any real-valued function $V\in C(X)$, the operator ${L}_{V}|_{\xcomp}$ is symmetric (as an operator in $\lw$).

In our first theorem we assume that $V=V_1+V_2$ where $V_j\in C(X)$ are real-valued, $V_2(x)\geq 0$ for all $x\in X$, and $\mathcal{L}_{V_1}|_{\xcomp}$ is lower semi-bounded. The latter means that there exists $C\in\mathbb{R}$ such that for all $u\in \xcomp$ we have
\begin{equation}\label{E:l-v-1}
(\mathcal{L}_{V_1}u,u)\geq C\|u\|^2.
\end{equation}
The quadratic form
\begin{equation*}
q_{V_2}(u):=\sum_{x\in X}\mu(x) V_2(x)|u(x)|^2,
\end{equation*}
with the domain
\begin{equation*}
\textrm{D}(q_{V_2}):=\{u\in \lw\colon \sum_{x\in X}\mu(x) V_2(x)|u(x)|^2<\infty\},
\end{equation*}
is non-negative and closed. The associated self-adjoint operator $M_{V_2}$ is just the \emph{maximal multiplication operator}: $M_{V_2}u:=V_2u$, where
\begin{equation}\label{d-v-2}
u\in\dom(M_{V_2}):=\{u\in \lw\colon V_2u\in \lw\}.
\end{equation}

As $\mathcal{L}_{V_1}|_{\xcomp}$ is a symmetric and lower semi-bounded operator in $\lw$, the quadratic form $u\mapsto (\mathcal{L}_{V_1}u,u)$, with domain $\xcomp$, is closable (by an abstract fact) and its closure, denoted by $h_{1}$, is a closed and lower semi-bounded form. The associated self-adjoint operator will be denoted by $L_{V_1}$. In the literature, the latter operator is known as the \emph{Friedrichs extension} of $\mathcal{L}_{V_1}|_{\xcomp}$.

As $h_{1}$ and $q_{V_2}$ are closed and lower semi-bounded forms, so is the sum $h_{1}+q_{V_2}$. The associated self-adjoint operator will be denoted by $L_{V_1}\widetilde{+}M_{V_2}$. The latter operator is known in the literature under the name \emph{the form-sum} of $L_{V_1}$ and $M_{V_2}$.

On the other hand, in view of~(\ref{E:l-v-1}) and $V_2\geq 0$, in the same way as in the discussion of $\mathcal{L}_{V_1}|_{\xcomp}$, we obtain the Friedrichs extension $L_{V}$ of $\mathcal{L}_{V}|_{\xcomp}$. This brings us to the first theorem:

\begin{thm}\label{T:main-1} Let $(X, b, \mu)$ be a weighted and connected graph satisfying the property (FC) as in section~\ref{SS:FC}. Assume that $V=V_1+V_2$, where $V_j\in C(X)$ are real-valued, $V_2(x)\geq 0$ for all $x\in X$, and $V_1$ satisfies the condition~(\ref{E:l-v-1}). Let $L_{V_1}$ and $L_{V}$ be the Friedrichs extensions of $\mathcal{L}_{V_1}|_{\xcomp}$ and $\mathcal{L}_{V}|_{\xcomp}$ respectively, and let $M_{V_2}$ be the maximal multiplication operator in $\lw$ corresponding to $V_2$. Then, the form-sum $L_{V_1}\widetilde{+}M_{V_2}$ coincides with $L_{V}$.
\end{thm}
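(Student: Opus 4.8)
The plan is to show the two form-sums $h_1 + q_{V_2}$ and $h_V$ (the closure of $u \mapsto (\mathcal{L}_V u, u)$ on $\xcomp$) coincide as closed forms, since equal forms yield equal self-adjoint operators. Note that on $\xcomp$ the quadratic forms satisfy $(\mathcal{L}_V u, u) = (\mathcal{L}_{V_1} u, u) + q_{V_2}(u)$ by linearity, so $h_V$ and $h_1 + q_{V_2}$ agree on the common core $\xcomp$. The issue is entirely one of form-domains: a priori, $\dom(h_V) = \overline{\xcomp}^{\,\|\cdot\|_{h_V}}$ could be strictly smaller than $\dom(h_1 + q_{V_2}) = \dom(h_1) \cap \dom(q_{V_2})$, because closing the sum form need not be the same as summing the closures. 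Both forms are closed and both restrict to the same form on $\xcomp$ with $h_V \subseteq h_1 + q_{V_2}$ (the domain of $h_V$ is the closure of $\xcomp$ in a norm no weaker than that of $h_1 + q_{V_2}$ restricted there, hence $\dom(h_V) \subseteq \dom(h_1)\cap\dom(q_{V_2})$ and the forms agree on $\dom(h_V)$). So it remains to prove the reverse inclusion: every $u \in \dom(h_1) \cap \dom(q_{V_2})$ lies in $\dom(h_V)$, i.e.\ can be approximated in the $h_V$-norm by elements of $\xcomp$.

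First I would reduce to non-negative $u$. Using Kato's inequality for the discrete weighted Laplacian and the discrete Green's formula (both cited in the excerpt), one shows that if $u \in \dom(h_1)$ then $|u| \in \dom(h_1)$ with $h_1(|u|) \leq h_1(u)$ modulo lower-order terms controlled by the semi-boundedness constant; combined with the obvious $q_{V_2}(|u|) = q_{V_2}(u)$ and a standard real/imaginary-part plus positive/negative-part decomposition, it suffices to approximate non-negative $u \in \dom(h_1)\cap\dom(q_{V_2})$. Here is where the \emph{positive form core property} (Proposition~\ref{P:P-2}) enters decisively: it gives a sequence $0 \le u_n \in \xcomp$ with $u_n \to u$ in the $h_1$-norm. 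The remaining task is to upgrade this to convergence in the $h_V$-norm, which amounts to showing $q_{V_2}(u_n - u) \to 0$, i.e.\ $\int V_2 |u_n - u|^2 \, d\mu \to 0$.

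The main obstacle is precisely this last step: $h_1$-convergence of $u_n \to u$ does not by itself control $q_{V_2}(u_n - u)$, since $V_2$ is an arbitrary non-negative function not assumed $h_1$-bounded. The way around it is to choose the approximating sequence more carefully — not an arbitrary $h_1$-approximant, but one produced by truncation. Concretely, for $0 \le u \in \dom(h_1)\cap\dom(q_{V_2})$, consider $u \wedge n$ (or $(u - 1/n)_+ \wedge n$), which is still in the form domain with $h_1(u\wedge n) \le h_1(u) + o(1)$ by a Markov/normal-contraction property of the Dirichlet-type form $h_1$, and which is dominated by $u$ pointwise, so $q_{V_2}(u\wedge n) \le q_{V_2}(u)$ with $q_{V_2}(u - u\wedge n) \to 0$ by dominated convergence. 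This reduces matters to bounded, and then to finitely supported via an exhaustion: multiply by cutoffs $\chi_k \in \xcomp$, $0 \le \chi_k \le 1$, $\chi_k \uparrow 1$, and use that multiplication by such cutoffs is bounded on $\dom(h_1)$ with the commutator term going to zero (here the finiteness condition (FC) and local control of $b$ are needed to estimate $\sum_{x,y} b(x,y)|\chi_k(x) - \chi_k(y)|^2 u(x) u(y)$). For the bounded, finitely-supported-after-cutoff functions, the positive form core property of $h_1$ applied in $\xcomp$ together with the fact that $V_2$ is bounded on any finite set closes the argument: the $q_{V_2}$-part of the approximation is automatically controlled since everything lives on (or is uniformly approximated on) a fixed finite support. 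Assembling: $u \in \dom(h_V)$, the forms coincide, hence $L_{V_1}\widetilde{+}M_{V_2} = L_V$.
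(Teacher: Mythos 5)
Your reduction is the right one and matches the paper's: the closed form $Q^0_{V_1}+q_{V_2}$ extends $Q^0_V$ because the two agree on $\xcomp$, so everything hinges on showing $\xcomp$ is dense in $\dom(Q^0_{V_1})\cap\dom(q_{V_2})$ for the combined form norm. The gap is in how you carry out that density step. First, your truncation $u\mapsto u\wedge n$ invokes ``a Markov/normal-contraction property of the Dirichlet-type form $h_1$,'' but $h_1=Q^0_{V_1}$ is \emph{not} a Dirichlet form: $V_1$ is only lower semi-bounded and may be very negative, so normal contractions need not decrease $h_1$, and --- more basically --- membership of $u\wedge n$ in the abstract closure domain $\dom(h_1)$ (as opposed to finiteness of the explicit energy expression) is not automatic. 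This is precisely the difficulty that forces the paper's Proposition~\ref{P:P-2} to go through the truncated potentials $V_1^{(k)}$ and strong resolvent convergence. Second, and more seriously, your cutoff step requires $\chi_k u\to u$ in the $h_1$-norm, i.e.\ control of terms like $\sum_{x,y}b(x,y)|\chi_k(x)-\chi_k(y)|^2|u(y)|^2$. The property (FC) gives nothing of the sort; the existence of cutoffs with uniformly small ``gradient'' is essentially an intrinsic-metric completeness condition, which the theorem deliberately does not assume (see remark~\ref{R:rem-gk}). With indicator cutoffs $\chi_k=1_{K_k}$ the boundary terms $\sum_{x\in K_k,\,y\notin K_k}b(x,y)|u(x)|^2$ need not vanish, so this step fails on general graphs.

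The paper sidesteps direct approximation entirely and argues by duality: if $u\in\dom(Q^0_{V_1})\cap\dom(q_{V_2})$ is orthogonal to $\xcomp$ in the combined form inner product, then Green's formula and testing against $\delta_x$ give the pointwise equation $\mathcal{L}_V u=-\alpha u$; Kato's inequality (lemma~\ref{L:L-2}) then yields $(\mathcal{L}_{V_1}+\alpha)|u|\le 0$ weakly against $[\xcomp]^{+}$, and pairing with non-negative $\xcomp$-approximants (positive form core, proposition~\ref{P:P-2}) of $w=(L_{V_1}+\alpha)^{-1}|u|\ge 0$ (positivity preservation, corollary~\ref{C:C-1}) forces $\||u|\|^2\le 0$. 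Note the positive form core property is applied to the auxiliary function $w$, not to the given $u$. If you want to rescue your direct route, the fix is not truncation-plus-cutoffs but a modification of the approximants themselves: take $0\le u_n\in\xcomp$ with $u_n\to u$ in the $h_1$-norm (for $u\ge0$) and replace them by $v_n:=\min\{u_n,u\}=\tfrac12(u_n+u-|u_n-u|)\in[\xcomp]^{+}$; the first Beurling--Deny condition gives $v_n\to u$ in the $h_1$-norm, while $0\le v_n\le u$ and pointwise convergence give $q_{V_2}(v_n-u)\to0$ by dominated convergence. As written, however, your argument does not close.
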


The next theorem is concerned with the stability of self-adjointness under positive perturbations.

\begin{thm}\label{T:main-2} Let $(X, b, \mu)$ be a weighted and connected graph satisfying the property (FC) as in section~\ref{SS:FC}. Assume that $V=V_1+V_2$, where $V_j\in C(X)$ are real-valued, $V_2(x)\geq 0$ for all $x\in X$, and $V_1$ satisfies the condition~(\ref{E:l-v-1}). Assume that $\mathcal{L}_{V_1}|_{\xcomp}$ is essentially self-adjoint. Additionally, assume that there exist numbers $a_j\geq 0$, $j=1,2$, such that
\begin{equation}\label{E:a-1-2}
\|V_2u\|\leq a_1\|\mathcal{L}_{V_1}u\|+a_2\|u\|,
\end{equation}
for all $u\in\xcomp$. Then, $\mathcal{L}_{V}|_{\xcomp}$ is essentially self-adjoint.
\end{thm}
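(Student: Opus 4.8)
The plan is to show that the deficiency subspace of $\mathcal{L}_V|_{\xcomp}$ below its spectrum is trivial. By~(\ref{E:l-v-1}) and $V_2\geq 0$ the operator $\mathcal{L}_V|_{\xcomp}$ is symmetric with $(\mathcal{L}_V u,u)\geq C\|u\|^2$ on $\xcomp$; under (FC) one has $\lw\subseteq\mathcal{F}$, and Green's formula (lemma~\ref{L:L-1}) identifies $(\mathcal{L}_V|_{\xcomp})^{*}$ with the maximal operator $v\mapsto\mathcal{L}_V v$ on $\{v\in\lw:\mathcal{L}_V v\in\lw\}$. So it suffices to fix $\lambda>\max(0,-C)$, put $\varepsilon:=C+\lambda>0$ (so $L_{V_1}+\lambda\geq\varepsilon$ and $V_2+\lambda>0$), and prove: if $v\in\lw$ and $(\mathcal{L}_V+\lambda)v=0$ pointwise, then $v=0$. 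I first record the consequences of $\mathcal{L}_{V_1}|_{\xcomp}$ being essentially self-adjoint: its closure is the Friedrichs extension $L_{V_1}$, hence $\xcomp$ is an operator core for $L_{V_1}$ and the maximal and minimal domains of $\mathcal{L}_{V_1}$ agree; and, approximating in the graph norm, the estimate~(\ref{E:a-1-2}) passes from $\xcomp$ to $\dom(L_{V_1})$, giving $\dom(L_{V_1})\subseteq\dom(M_{V_2})$ and $\|V_2u\|\leq a_1\|L_{V_1}u\|+a_2\|u\|$ there.

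Now $(\mathcal{L}_{V_1}+\lambda)v=-V_2v$ pointwise, so $\mathcal{L}_{V_1}v=-(V_2+\lambda)v$, and Kato's inequality for the discrete weighted Laplacian (as in~\cite{MS-18}) gives
\[
\mathcal{L}_{V_1}|v|\ \leq\ \operatorname{Re}\!\bigl(\overline{\operatorname{sgn}v}\,\mathcal{L}_{V_1}v\bigr)\ =\ -(V_2+\lambda)|v|,
\]
that is, $(\mathcal{L}_{V_1}+\lambda)|v|\leq -V_2|v|\leq 0$ at every vertex. Put $\psi:=|v|\in\lw$, $\psi\geq 0$.

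The crucial—and, I expect, hardest—step is to promote this to $\psi\in\dom(h_1)$, where $h_1$ is the form of $L_{V_1}$; equivalently, since $\mathcal{L}_{V_1}v=-(V_2+\lambda)v$, to show $V_2v\in\lw$, whence $\mathcal{L}_{V_1}v\in\lw$ and $v\in\dom(L_{V_1})$ (using that the two domains agree). For this I would test the pointwise inequality $(\mathcal{L}_{V_1}+\lambda)\psi+V_2\psi\leq 0$ against finitely supported functions $\chi^{2}\psi$ (with $\chi\in\xcomp$, $0\leq\chi\leq 1$, running through an exhaustion, and also against $\chi^{2}e^{-tL_{V_1}}\psi$, using that $e^{-tL_{V_1}}$ is positivity preserving by proposition~\ref{P:P-1}), rewrite the left side by Green's formula in terms of the Schr\"odinger form of $L_{V_1}$ plus an error term controlled by the increments $b(x,y)(\chi(x)-\chi(y))^2$, absorb the coercive term $\geq\varepsilon\|\chi\psi\|^2$, and invoke~(\ref{E:a-1-2}) to dominate the localization cross terms; a Fatou/monotone-convergence passage then yields $V_2\psi\in\lw$ together with a uniform form-norm bound on $\chi\psi$ along the exhaustion, whence $\psi\in\dom(h_1)$. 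I anticipate that making this localization work at the stated generality—only (FC) and connectedness, no local finiteness, and $V_1$ possibly not pointwise bounded below (where a ground-state transform normalizing $V_1$ to be non-negative may help)—is the principal difficulty, and that it is here, not in any Kato--Rellich step, that~(\ref{E:a-1-2}) enters (note that $a_1<1$ is not assumed).

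Granting $\psi\in\dom(h_1)$, the rest is short. For any non-negative $\varphi\in\xcomp$, Green's formula and the closedness of $h_1$ give $(h_1+\lambda)(\psi,\varphi)=(\mathcal{L}_{V_1}\psi+\lambda\psi,\varphi)_{a}\leq 0$, the inequality being termwise since $\mathcal{L}_{V_1}\psi+\lambda\psi\leq 0$ and $\varphi\geq 0$. As $\psi\in\dom(h_1)$, the functional $\varphi\mapsto(h_1+\lambda)(\psi,\varphi)$ is continuous in the form norm; hence the positive form core property (proposition~\ref{P:P-2}), which approximates the non-negative $\psi$ in the form norm of $L_{V_1}$ by non-negative elements of $\xcomp$, gives $(h_1+\lambda)(\psi,\psi)\leq 0$. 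Since $(h_1+\lambda)(\psi,\psi)\geq\varepsilon\|\psi\|^2$, we conclude $\psi=0$, i.e.\ $v=0$; thus the deficiency subspace is trivial and $\mathcal{L}_V|_{\xcomp}$ is essentially self-adjoint. (Alternatively, once $v\in\dom(L_{V_1})$ the displayed inequality reads $(L_{V_1}+\lambda)|v|\leq -V_2|v|\leq 0$ in $\lw$, so $|v|=(L_{V_1}+\lambda)^{-1}\bigl[(L_{V_1}+\lambda)|v|\bigr]\leq 0$ by positivity preservation—again forcing $v=0$.)
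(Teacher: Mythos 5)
Your overall strategy is genuinely different from the paper's: you aim to kill the deficiency space of $\mathcal{L}_{V}|_{\xcomp}$ at a real point $-\lambda$ below the lower bound, using Green's formula to identify the adjoint with the maximal operator, Kato's inequality (lemma~\ref{L:L-2}) to pass to $\psi=|v|$, and the positive form core property (proposition~\ref{P:P-2}) to conclude. The endgame you describe is sound and is in essence the paper's proof of theorem~\ref{T:main-1}. But there is a genuine gap, and it is exactly the one you flag yourself: you never establish that $\psi\in\dom(h_1)$, equivalently that $V_2v\in\lw$ so that $v$ lies in the (maximal $=$ minimal) domain of $\mathcal{L}_{V_1}$. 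Everything after ``granting $\psi\in\dom(h_1)$'' is conditional on this. The sketched localization --- testing against $\chi^2\psi$ along an exhaustion, controlling the increments $b(x,y)(\chi(x)-\chi(y))^2$, then absorbing and invoking~(\ref{E:a-1-2}) --- is not a proof and, as stated, does not go through at this level of generality: the theorem assumes only (FC) and connectedness, with no local finiteness and no intrinsic metric or completeness, so there is no supply of cut-offs with controlled increments; the existence of such cut-offs is precisely the completeness hypothesis of~\cite{GKS-15} that remark~\ref{R:rem-gk} stresses is \emph{not} assumed here. Moreover,~(\ref{E:a-1-2}) is a global relative bound valid on $\xcomp$ (and, after closure, on $\dom(L_{V_1})$); applying it to $\chi\psi$ produces $\|\mathcal{L}_{V_1}(\chi\psi)\|$, whose commutator terms are again uncontrolled increments of $\chi$, and applying it directly to $v$ is circular, since its extension to $\dom(L_{V_1})$ presupposes $v\in\dom(L_{V_1})$, which is what you are trying to prove. (A smaller issue: your alternative ending writes $(L_{V_1}+\lambda)|v|$ for $v\in\dom(L_{V_1})$, but Beurling--Deny only places $|v|$ in the form domain, not in $\dom(L_{V_1})$.)

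The paper's proof is structured precisely to avoid ever having to place a deficiency element into $\dom(L_{V_1})$. It shows instead that $\dom(L_{V})\subseteq\dom(\overline{\mathcal{L}_{V}|_{\xcomp}})$, whence the Friedrichs extension coincides with the closure. Concretely: lemma~\ref{L:2-3} uses essential self-adjointness of $\mathcal{L}_{V_1}|_{\xcomp}$ and~(\ref{E:a-1-2}) to get $\dom(L_{V_1})\subseteq\dom(\overline{\mathcal{L}_{V}|_{\xcomp}})$ with $L_{V}u=L_{V_1}u+M_{V_2}u$ there; then a given $u\in\dom(L_{V})$ is approximated in graph norm by $u^{k}_{r}=(r^{-1}T_k+1)^{-1}u\in\dom(L_{V_1})$, where $T_k=L_{V_1}+M_{\min\{V_2,k\}}$ is self-adjoint by Kato--Rellich (bounded perturbation), the resolvent domination $|u^{k}_{r}|\leq(r^{-1}L_{V_1}+1)^{-1}|u|$ replaces your Kato-inequality step, and the dominated convergence needed to pass $k\to\infty$ is exactly where~(\ref{E:a-1-2}) enters, via the majorant $V_2\,(r^{-1}L_{V_1}+1)^{-1}|u|\in\lw$. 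If you want to salvage your route, the missing regularity statement ($V_2v\in\lw$ for maximal solutions of $(\mathcal{L}_{V}+\lambda)v=0$) would need an argument of comparable strength to this approximation scheme; as written, your proposal does not supply one.
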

\begin{rem}\label{R:rem-gk} In the context of locally finite weighted graphs $(X, b, \mu)$ that are metrically complete with respect to an intrinsic metric, theorem 2.16(b) from~\cite{GKS-15} guarantees the essential self-adjointness of $\mathcal{L}_{V_1}$ on $\xcomp$ because $\mathcal{L}_{V_1}$ is lower-semibounded in view of the condition~(\ref{E:l-v-1}). Likewise, in this setting, the quoted theorem from~\cite{GKS-15} tells us that  $\mathcal{L}_{V}|_{\xcomp}$ is essentially self-adjoint (because $V=V_1+V_2$ and $V_2\geq 0$). Note that in theorem~\ref{T:main-2} we do not impose any metric completeness conditions on  $(X, b, \mu)$.

Looking at the hypotheses of theorem~\ref{T:main-2}, we stress that $a_1$ is not assumed to satisfy $0\leq a_1<1$ (or $a_1=1$). Thus, we cannot use Kato--Rellich theorem (or W\"ust's theorem) to quickly infer the essential self-adjointness of $\mathcal{L}_{V}|_{\xcomp}$.
\end{rem}

\section{Preliminaries}\label{S:prelim} In this section we collect some definitions and preliminary facts and prove the positive form core property.

\subsection{Green's Formula} The following useful formula was proven in lemma 4.7 of~\cite{HK-11} (see also lemma 2.1 of~\cite{MS-18} for an extension to the magnetic Schr\"odinger operators):
\begin{lemma} \label{L:L-1}  Let $(X, b, \mu)$ be a weighted graph (not necessarily satisfying the property (FC)). Let $W\in C(X)$ be a real-valued function. Let $\mathcal{F}$ be as in~(\ref{E:def-f}) and let $(\cdot,\cdot)_{a}$ be as in~(\ref{E:a-d}). Then, for all $f\in\mathcal{F}$ and $u\in\xcomp$, we have
\begin{align}\nonumber
&(\mathcal{L}_{W}f,u)_{a}=\sum_{x\in X}\mu(x)\mathcal{L}_{W}f(x)\overline{u(x)}=\sum_{x\in X}\mu(x)f(x)\overline{\mathcal{L}_{W}u(x)}\nonumber\\
&=\frac{1}{2}\displaystyle\sum_{x,y\in X}b(x,y)(f(x)-f(y))\overline{(u(x)-u(y))}+\displaystyle\sum_{x\in X}\mu(x)W(x)f(x)\overline{u(x)},\nonumber
\end{align}
with the sums converging absolutely.
\end{lemma}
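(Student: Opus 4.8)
The plan is to reduce the identity to a purely combinatorial symmetrization of a double sum over the edge weight $b$, with the only genuine analytic content being the justification of absolute convergence, which is what licenses Fubini's theorem and the relabelling of summation indices. First I would record that since $u\in\xcomp$ has finite support $K:=\operatorname{supp}u$, the pairing $(\mathcal{L}_W f,u)_a=\sum_{x}\mu(x)\mathcal{L}_W f(x)\overline{u(x)}$ is a finite sum (this first equality is just the definition~(\ref{E:a-d})), and each value $\mathcal{L}_W f(x)$ is well defined because $f\in\mathcal{F}$ forces $\sum_{y}b(x,y)|f(y)|<\infty$, which together with (b3) makes~(\ref{E:magnetic-lap}) absolutely convergent at every $x$. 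Substituting~(\ref{E:magnetic-lap}) and peeling off the potential term $\sum_{x}\mu(x)W(x)f(x)\overline{u(x)}$, which already appears verbatim on the right-hand side, the whole problem collapses to establishing
\[
\sum_{x,y}b(x,y)(f(x)-f(y))\overline{u(x)}=\tfrac12\sum_{x,y}b(x,y)(f(x)-f(y))\overline{(u(x)-u(y))}.
\]

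The second step is to verify the absolute convergence of the double sums at issue. The quantity $\sum_{x,y}b(x,y)|f(x)-f(y)|\,|u(x)-u(y)|$ is supported on pairs with $x\in K$ or $y\in K$. For the pairs with $x\in K$ I bound $|u(x)-u(y)|\leq 2\|u\|_{\infty}$ and use $\sum_{y}b(x,y)(|f(x)|+|f(y)|)<\infty$, which follows from (b3) and $f\in\mathcal{F}$; for the remaining pairs, where the support constraint sits on $y\in K$, I invoke the symmetry $b(x,y)=b(y,x)$ from (b1), which converts the defining property of $\mathcal{F}$ into $\sum_{x}b(x,y)|f(x)|<\infty$ for each fixed $y$. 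The same estimate controls $\sum_{x,y}b(x,y)|f(x)|\,|u(x)-u(y)|$, which is what I need for the second equality of the statement.

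With convergence in hand, the symmetrization is immediate. Setting $S:=\sum_{x,y}b(x,y)(f(x)-f(y))\overline{u(x)}$ and relabelling $x\leftrightarrow y$ while using (b1), I obtain $S=-\sum_{x,y}b(x,y)(f(x)-f(y))\overline{u(y)}$; adding the two representations gives $2S=\sum_{x,y}b(x,y)(f(x)-f(y))\overline{(u(x)-u(y))}$, which is exactly the displayed identity and hence the third line of the lemma. For the middle equality $\sum_{x}\mu(x)\mathcal{L}_W f(x)\overline{u(x)}=\sum_{x}\mu(x)f(x)\overline{\mathcal{L}_W u(x)}$, I would use that $W$ is real-valued, expand $\overline{\mathcal{L}_W u(x)}$ via~(\ref{E:magnetic-lap}), and run the identical symmetrization—now on the factor $f(x)$ instead of $\overline{u(x)}$—to reach the very same symmetric double sum; the manifest invariance of that symmetric expression under interchanging $f$ and $u$ (with conjugation) then closes the chain of equalities.

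The main obstacle is the convergence bookkeeping in the second step, not the algebra of the third: without it the relabelling of dummy indices is unjustified. The delicate point is that the defining condition of $\mathcal{F}$ controls $\sum_{y}b(x,y)|f(y)|$ only for $x$ held fixed, so the symmetry (b1) is indispensable for also taming the sums over the first variable in precisely those terms where the finite-support constraint lands on the second variable.
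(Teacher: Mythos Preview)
Your argument is correct: the absolute-convergence bookkeeping (splitting the double sum according to whether the finite support $K=\operatorname{supp}u$ hits the first or the second variable, and using (b1) to convert the $\mathcal{F}$-condition on $f$ into control of $\sum_{x}b(x,y)|f(x)|$ for fixed $y$) is exactly what is needed to license Fubini and the $x\leftrightarrow y$ relabelling, after which the symmetrization $2S=\sum_{x,y}b(x,y)(f(x)-f(y))\overline{(u(x)-u(y))}$ and its twin for the middle equality are immediate.

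The paper does not give its own proof of this lemma; it simply cites lemma~4.7 of~\cite{HK-11} (and lemma~2.1 of~\cite{MS-18} for the magnetic case). Your direct computation is precisely the standard argument found in those references, so there is nothing to contrast.
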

\begin{remark}\label{R:R-green} If the graph $(X, b, \mu)$ satisfies the property (FC), then $\lw\subseteq\mathcal{F}$; see lemma 2.15 in~\cite{MS-18}.
\end{remark}

Before going further, we should point out that we are not assuming that $(X, b, \mu)$ satisfies the property (FC) until we formulate corollary~\ref{C:C-1}.

\subsection{Quadratic Forms and Self-adjoint Operators}\label{SS:q-forms}
Here we recall some terminology from section 2.5 in~\cite{MS-18}. For a real-valued $W\in C(X)$, the \emph{space of functions of finite energy} $\mathcal{D}_{W}$ is defined as
\begin{equation*}
\mathcal{D}_{W}:=\{f\in C(X)\colon \displaystyle\sum_{x,y\in X}b(x,y)|f(x)-f(y)|^2+\displaystyle\sum _{x\in X}\mu(x)|W(x)||f(x)|^2<\infty\}.
\end{equation*}
\begin{remark} It turns out (see lemma 2.4 in~\cite{MS-18}) that the inclusion $\mathcal{D}_{W}\subseteq\mathcal{F}$ holds.
\end{remark}
With this notation in place, we define the sesquilinear form $Q_{W}\colon \mathcal{D}_{W}\times \mathcal{D}_{W}\to \mathbb{C}$ by the formula
\begin{equation*}
Q_{W}(f,g):=\frac{1}{2}\displaystyle\sum_{x,y\in X}b(x,y)(f(x)-f(y))\overline{(g(x)-g(y))}+\displaystyle\sum_{x\in X}\mu(x)W(x)f(x)\overline{g(x)}.
\end{equation*}
We use the symbol $Q^{c}_{W}$ to indicate the restriction of $Q_{W}$ to $\xcomp$. To simplify the notations, the symbols $Q_{W}(\cdot)$ and $Q^{c}_{W}(\cdot)$ stand for the quadratic forms corresponding to $Q_{W}(\cdot,\cdot)$ and $Q^{c}_{W}(\cdot,\cdot)$.

If the form $Q^{c}_{W}$ is lower semi-bounded and closable in the space $\lw$, the corresponding closure of $Q^{c}_{W}$ will be denoted by $Q^{0}_{W}$, and the associated (lower semi-bounded) operator will be denoted by $H^{0}_{W}$. The infimum of the spectrum of $H^{0}_{W}$ will be indicated by $\lambda_{0}(H^{0}_{W})$.
\subsection{Positive Form Core}\label{SS:pfc}
Let $h$ be a lower semi-bounded and closed quadratic form in $\lw$ and let $\lambda_0(h)$ be the largest possible $C\in\mathbb{R}$ such that $h(u)\geq C$ for all $u\in\dom(h)$. For $\beta>0$, the notation
\begin{equation}\label{E:n-alpha}
\|\cdot\|^2_{h}:=h(\cdot)+(\beta-\lambda_{0}(h))\|\cdot\|^2,
\end{equation}
where $\|\cdot\|$ is the norm in $\lw$, indicates the form norm corresponding to $h$. While the norm~(\ref{E:n-alpha}) depends on $\beta$, it turns out that the norms corresponding to any pair $\beta_1>0$, $\beta_2>0$ are equivalent.

Let $T$ be a lower-semibounded and symmetric operator in $\lw$ with domain $\dom (T)=\xcomp$. Let $t$ be the closure of the (closable) form $u\mapsto (Tu,u)$, $u\in\xcomp$, and let $T_{F}$ be the (lower semi-bounded) self-adjoint operator corresponding to $t$. (The operator $T_{F}$ is the Friedrichs extension of $T$.)

Following the terminology of~\cite{cyc-81}, we say that $T_{F}$ \emph{has a positive form core} if for all $u\in [\dom(t)]^{+}$ there exists a sequence $u_k\in [\xcomp]^{+}$ such that $\|u_k-u\|_{t}\to 0$ as $k\to\infty$. (Here, $\dom (t)$ stands for the domain of the form $t$, and $G^{+}$ denotes the set of non-negative elements of the set $G\subseteq C(X)$ and $\|\cdot\|_{t}$ is the form norm corresponding to $t$.)

\subsection{Preliminary Lemmas}

The following proposition was proved in~\cite{MS-18}:

\begin{prop}\label{P:P-1} Let $(X, b, \mu)$ be a weighted graph (not necessarily satisfying the property (FC)). Assume that $Q^{c}_{W}$ is lower semi-bounded. Then, the following properties hold:
\begin{enumerate}
  \item [(i)] The form $Q^{c}_{W}$ is closable in $\lw$.
  \item [(ii)] The closure $Q^{0}_{W}$ of $Q^{c}_{W}$ satisfies the first Beurling--Deny condition, that is, if $u\in \dom(Q^{0}_{W})$, then $|u|\in \dom(Q^{0}_{W})$ and $Q^{0}_{W}(|u|)\leq Q^{0}_{W}(u)$.
  \item [(iii)] If $H^{0}_{W}$ is the self-adjoint operator associated to $Q^{0}_{W}$, then $(H^{0}_{W}+\alpha)^{-1}$ is positivity preserving for all $\alpha>-\lambda_0(H^{0}_{W})$, that is, the inequality $0\leq f\in\lw$ implies $(H^{0}_{W}+\alpha)^{-1}f\geq 0$.
\end{enumerate}
\end{prop}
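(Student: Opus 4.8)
\emph{Overall plan.} I would prove the three assertions in the order stated: (i) realize $Q^{c}_{W}$ as (a perturbation of) the restriction of a closed form and deduce closability; (ii) verify the normal contraction inequality $Q^{0}_{W}(|u|)\le Q^{0}_{W}(u)$ first on the core $\xcomp$ and then propagate it to the closure $Q^{0}_{W}$; (iii) convert the resulting first Beurling--Deny condition into positivity of the resolvent by a variational (equivalently, semigroup) argument. Green's formula (Lemma~\ref{L:L-1}) enters only to know that on $\xcomp$ the form $Q^{c}_{W}$ agrees with $u\mapsto(\mathcal L_{W}u,u)_{a}$ and with the restriction of $Q_{W}$.

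\emph{Part (i).} After replacing $W$ by $W+C$ for a suitable constant (a bounded form perturbation, irrelevant for closability) we may assume $Q^{c}_{W}\ge 0$ on $\xcomp$. Write $W=W_{+}-W_{-}$ with $W_{\pm}=\max(\pm W,0)\ge 0$, and let $q_{W_{\pm}}$ denote the maximal multiplication forms by $W_{\pm}$ in $\lw$. The form $Q_{W_{+}}$ on $\mathcal D_{W_{+}}\cap\lw$ is closed, being the sum of the two closed non-negative forms $Q_{0}$ on $\mathcal D_{0}\cap\lw$ (the graph energy form, i.e.\ $\mathcal D_{W}, Q_{W}$ with $W=0$) and $q_{W_{+}}$; hence its restriction $Q^{c}_{W_{+}}$ to $\xcomp$ is closable, with closure $Q^{0}_{W_{+}}$, and likewise $Q^{c}_{0}$ is closable. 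From $0\le Q^{c}_{W}=Q^{c}_{W_{+}}-q_{W_{-}}$ on $\xcomp$ we get $q_{W_{-}}(u)\le Q^{c}_{W_{+}}(u)$ there, so $q_{W_{-}}$ extends continuously to $\dom(Q^{0}_{W_{+}})$, dominated by $Q^{0}_{W_{+}}$; the remaining task is to conclude that $Q^{c}_{W}$ itself is closable. \textbf{This is where I expect the real work to lie}: the relative form bound of the killing term $q_{W_{-}}$ against $Q^{0}_{W_{+}}$ is only known to be $\le 1$ (the borderline case), so one cannot invoke a KLMN-type argument, and for graphs that are neither locally finite nor required to satisfy~(FC) this step is genuinely delicate. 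I would follow the argument of~\cite{MS-18}. In the locally finite case one can argue directly: if $(u_{n})\subseteq\xcomp$ is Cauchy for the form norm of $Q^{c}_{W}$ and $u_{n}\to 0$ in $\lw$, then for each fixed $v\in\xcomp$ Green's formula gives $Q^{c}_{W}(u_{n},v)=\sum_{x}\mu(x)u_{n}(x)\overline{\mathcal L_{W}v(x)}\to 0$ (the sum running over the finite set on which $\mathcal L_{W}v$ lives), hence $u_{n}\to 0$ against a dense set in the form Hilbert space, which forces $Q^{c}_{W}(u_{n})\to 0$, i.e.\ closability.

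\emph{Part (ii).} On the core the inequality is elementary: for $u\in\xcomp$ one has $|u|\in\xcomp$, the potential terms of $Q^{c}_{W}(|u|)$ and $Q^{c}_{W}(u)$ coincide, and $|\,|u(x)|-|u(y)|\,|\le|u(x)-u(y)|$ gives $Q^{c}_{W}(|u|)\le Q^{c}_{W}(u)$. Now take $u\in\dom(Q^{0}_{W})$ and $u_{n}\in\xcomp$ with $\|u_{n}-u\|_{Q^{0}_{W}}\to 0$ in the form norm~(\ref{E:n-alpha}). Then $|u_{n}|\to|u|$ in $\lw$ (again by $|\,|u_{n}|-|u|\,|\le|u_{n}-u|$), while $\|\,|u_{n}|\,\|_{Q^{0}_{W}}\le\|u_{n}\|_{Q^{0}_{W}}$ by the core estimate, so $(|u_{n}|)$ is bounded in the form Hilbert space; a weakly convergent subsequence must have $\lw$-limit $|u|$, so $|u|\in\dom(Q^{0}_{W})$, and weak lower semicontinuity of the form norm together with $\|\,|u|\,\|=\|u\|$ yields $Q^{0}_{W}(|u|)\le Q^{0}_{W}(u)$.

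\emph{Part (iii).} Fix $\alpha>-\lambda_{0}(H^{0}_{W})$ and $0\le f\in\lw$, and set $g:=(H^{0}_{W}+\alpha)^{-1}f$, which is the unique minimizer over $\dom(Q^{0}_{W})$ of the strictly convex functional $u\mapsto Q^{0}_{W}(u)+\alpha\|u\|^{2}-2\operatorname{Re}(f,u)$. By part (ii), $|g|\in\dom(Q^{0}_{W})$; passing from $g$ to $|g|$ does not increase $Q^{0}_{W}$, leaves $\|g\|^{2}$ unchanged, and does not decrease $\operatorname{Re}(f,\cdot)$, since $f\ge 0$ gives $\operatorname{Re}(f,g)=\sum_{x}\mu(x)f(x)\operatorname{Re}g(x)\le\sum_{x}\mu(x)f(x)|g(x)|=(f,|g|)$. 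Hence $|g|$ is also a minimizer, so $g=|g|\ge 0$. (Equivalently, by the Beurling--Deny theorem the first Beurling--Deny condition of part (ii) is equivalent to positivity preservation of $e^{-tH^{0}_{W}}$, and then $(H^{0}_{W}+\alpha)^{-1}=\int_{0}^{\infty}e^{-\alpha t}e^{-tH^{0}_{W}}\,dt$ is positivity preserving for $\alpha>-\lambda_{0}(H^{0}_{W})$.)
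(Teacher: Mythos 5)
First, a point of reference: the paper does not actually prove Proposition~\ref{P:P-1} --- it is quoted from \cite{MS-18} (theorem 3.8 and lemma 3.9 there), with a probabilistic alternative via the Feynman--Kac--It\^o formula in \cite{GKS-15}. So your proposal is being measured against a citation, not a written argument. Against that standard, your parts (ii) and (iii) are correct and complete: the pointwise contraction $Q^{c}_{W}(|u|)\leq Q^{c}_{W}(u)$ on $\xcomp$, propagated to the closure by weak compactness in the form Hilbert space plus lower semicontinuity, and the variational argument that $|g|$ is also a minimizer of $u\mapsto Q^{0}_{W}(u)+\alpha\|u\|^{2}-2\operatorname{Re}(f,u)$ so that $g=|g|$ by strict convexity, are exactly the standard analytic route (and essentially the one taken in \cite{MS-18}).

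The genuine gap is in part (i), and you have correctly located it yourself. Your direct closability argument hinges on the identity $Q^{c}_{W}(u_{n},v)=\sum_{x}\mu(x)u_{n}(x)\overline{\mathcal{L}_{W}v(x)}$ and on being able to pass to the limit $u_{n}\to 0$ in this sum. That works when $\mathcal{L}_{W}v$ has finite support (local finiteness) or lies in $\lw$ (property (FC)); but the proposition is stated for graphs satisfying neither, in which case $\mathcal{L}_{W}v$ for $v\in\xcomp$ is merely an element of $C(X)$ and the pairing with $u_{n}$ is only an absolutely convergent series with no dominating $\ell^{2}$ control, so $u_{n}\to 0$ in $\lw$ does not let you conclude. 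Likewise, your KLMN-style reduction stalls exactly where you say it does: lower semi-boundedness only gives $q_{W_{-}}\leq Q^{c}_{W_{+}}+C\|\cdot\|^{2}$ on $\xcomp$, i.e.\ relative form bound $\leq 1$, which is the borderline case where closability of the difference is not automatic. This borderline closability is precisely the nontrivial content of theorem 3.8 in \cite{MS-18} (whose proof approximates $W$ by the truncations $\max\{W,-n\}$ and uses monotone convergence of forms), and it cannot be recovered by the elementary argument you give. Since the remaining two parts are downstream of (i), your proof is complete only for locally finite graphs or under (FC) --- which, to be fair, covers every application the paper actually makes of this proposition, but not the proposition as stated.
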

\begin{rem} For the proof of proposition~\ref{P:P-1}, see~\cite{MS-18}; more specifically, the arguments pertaining to theorem 3.8 and lemma 3.9. We should point out that the author of~\cite{MS-18} considers much more general operators (acting on sections of Hermitian vector bundles over graphs). For a probabilistic proof, based on the Feynman--Kac--It\^o formula on graphs, see the paper~\cite{GKS-15}.
\end{rem}

In this article we will use the following special case of proposition~\ref{P:P-1}:

\begin{cor}\label{C:C-1} Let $(X, b, \mu)$ be a weighted graph satisfying the property (FC). Assume that $V_1\in C(X)$ is a real-valued function satisfying the condition~(\ref{E:l-v-1}), with $\mathcal{L}_{V_1}$ as in~(\ref{E:magnetic-lap}). Let $L_{V_1}$ be the Friedrichs extension of $\mathcal{L}_{V_1}|_{\xcomp}$. Then, the following properties hold:
\begin{enumerate}
  \item [(i)] The (closed, semi-bounded) form $h_{1}$ corresponding to $L_{V_1}$ satisfies the first Beurling--Deny condition;
  \item [(ii)]  For all $\alpha>-\lambda_0(L_{V_1})$, the operator $(L_{V_1}+\alpha)^{-1}$ is positivity preserving in $\lw$.
\end{enumerate}
\end{cor}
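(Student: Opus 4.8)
The plan is to deduce Corollary~\ref{C:C-1} from Proposition~\ref{P:P-1} by recognizing the Friedrichs form $h_1$ of $\mathcal{L}_{V_1}|_{\xcomp}$ as the closure $Q^0_{V_1}$ of a restricted energy form $Q^c_{V_1}$, so that parts (i) and (ii) become verbatim instances of Proposition~\ref{P:P-1}(ii) and~(iii) with $W=V_1$. First I would invoke the hypothesis that $(X,b,\mu)$ satisfies (FC), so by lemma~3.15 of~\cite{MS-18} we have $\mathcal{L}_{V_1}[\xcomp]\subseteq\lw$ and $\mathcal{L}_{V_1}|_{\xcomp}$ is a well-defined symmetric operator in $\lw$; by Green's formula (lemma~\ref{L:L-1}) its quadratic form is $u\mapsto(\mathcal{L}_{V_1}u,u)=Q^c_{V_1}(u)$ for all $u\in\xcomp$. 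The lower-semiboundedness assumption~(\ref{E:l-v-1}) says precisely that $Q^c_{V_1}$ is lower semi-bounded, so Proposition~\ref{P:P-1} applies.

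The one genuine point requiring care — and the step I expect to be the main (if modest) obstacle — is the identification $h_1 = Q^0_{V_1}$, i.e.\ that the closure of $u\mapsto(\mathcal{L}_{V_1}u,u)$ on $\xcomp$ taken \emph{inside the Hilbert space $\lw$} coincides with the closure $Q^0_{V_1}$ of $Q^c_{V_1}$. This is immediate once one notes that the two forms agree on the common core $\xcomp$ and that closure of a closable form is uniquely determined by its values on a core together with the ambient Hilbert space norm; both closures are computed with respect to the same form norm $\|\cdot\|^2_{h}=Q^c_{V_1}(\cdot)+(\beta-\lambda_0)\|\cdot\|^2$ (equation~(\ref{E:n-alpha})), whose equivalence class is independent of $\beta$ as recorded in section~\ref{SS:pfc}. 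Hence $\dom(h_1)=\dom(Q^0_{V_1})$ with $h_1=Q^0_{V_1}$, and consequently the associated self-adjoint operator $L_{V_1}$ — the Friedrichs extension of $\mathcal{L}_{V_1}|_{\xcomp}$ — equals $H^0_{V_1}$ and $\lambda_0(L_{V_1})=\lambda_0(H^0_{V_1})$.

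With this identification established, part~(i) is exactly Proposition~\ref{P:P-1}(ii): if $u\in\dom(h_1)=\dom(Q^0_{V_1})$ then $|u|\in\dom(Q^0_{V_1})=\dom(h_1)$ and $h_1(|u|)=Q^0_{V_1}(|u|)\leq Q^0_{V_1}(u)=h_1(u)$, which is the first Beurling--Deny condition for $h_1$. Part~(ii) is exactly Proposition~\ref{P:P-1}(iii): for every $\alpha>-\lambda_0(H^0_{V_1})=-\lambda_0(L_{V_1})$ the resolvent $(H^0_{V_1}+\alpha)^{-1}=(L_{V_1}+\alpha)^{-1}$ maps $0\leq f\in\lw$ to a non-negative function. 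Finally I would remark that part~(i) of Proposition~\ref{P:P-1} (closability of $Q^c_{V_1}$) is already guaranteed here by the abstract fact that the form of a symmetric lower semi-bounded operator is closable, consistently with the discussion preceding the statement of theorem~\ref{T:main-1}, so no additional hypothesis beyond (FC) and~(\ref{E:l-v-1}) is needed.
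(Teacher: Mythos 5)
Your proposal is correct and follows essentially the same route as the paper: use (FC) together with Green's formula (lemma~\ref{L:L-1} and remark~\ref{R:R-green}) to identify the form $u\mapsto(\mathcal{L}_{V_1}u,u)$ on $\xcomp$ with $Q^{c}_{V_1}$, conclude closability and the identification $h_1=Q^{0}_{V_1}$ from the abstract theory of symmetric lower semi-bounded operators, and then read off (i) and (ii) as instances of proposition~\ref{P:P-1}(ii) and (iii). Your extra paragraph justifying $h_1=Q^{0}_{V_1}$ via the common core and the form norm is a slightly more explicit version of what the paper states in one line, but it is the same argument.
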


\begin{proof} As $(X, b, \mu)$ satisfies the property (FC), we can use Green's formula and remark~\ref{R:R-green} to infer that the quadratic form $u\mapsto (\mathcal{L}_{V_1}u,u)$, with domain $\xcomp$, coincides with the form $Q^{c}_{V_1}$. Moreover (by Green's formula), we see that $\mathcal{L}_{V_1}|_{\xcomp}$ is a symmetric operator in $\lw$. Hence, by an abstract fact, the form $Q^{c}_{V_1}$ is closable, and its closure $Q^{0}_{V_1}$ coincides with the form $h_{1}$. With this in mind, the properties (i) and (ii) of the corollary are restatements of properties (ii) and (iii) of proposition~\ref{P:P-1} respectively.
\end{proof}

The following lemma is concerned with the positive form core property:

\begin{lemma}\label{L-pfc-plus} Let $(X, b, \mu)$ be a weighted graph satisfying the property (FC). Assume that $W\in C(X)$ satisfies $W(x)\geq 0$ for all $x\in X$. Let $H^{0}_{W}$ be as in section~\ref{SS:q-forms}. Then, $H^{0}_{W}$ has a positive form core.
\end{lemma}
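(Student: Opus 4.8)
The goal is to show that every non-negative $u$ in the form domain $\dom(Q^{0}_{W})$ can be approximated in the form norm $\|\cdot\|_{Q^{0}_{W}}$ by non-negative finitely supported functions. Since $W\geq 0$, the form $Q^{c}_{W}$ is non-negative, hence lower semi-bounded with $\lambda_0(H^{0}_{W})\geq 0$, and the form norm is comparable to $\|u\|^2_{Q^{0}_{W}}\asymp Q_{W}(u)+\|u\|^2=\tfrac12\sum_{x,y}b(x,y)|u(x)-u(y)|^2+\sum_x\mu(x)(W(x)+1)|u(x)|^2$. The starting point is that $\xcomp$ is dense in $\dom(Q^{0}_{W})$ in this norm by the very definition of $Q^{0}_{W}$ as the closure of $Q^{c}_{W}$; so for a given $u\in[\dom(Q^{0}_{W})]^{+}$ we can pick $v_k\in\xcomp$ with $\|v_k-u\|_{Q^{0}_{W}}\to 0$. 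The problem is that the $v_k$ need not be non-negative. The natural fix is to replace $v_k$ by its positive part $v_k^{+}=\max(v_k,0)$, which is again finitely supported, and to prove that $\|v_k^{+}-u\|_{Q^{0}_{W}}\to 0$.

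\textbf{Key step: contractivity of the map $v\mapsto v^{+}$ on the form.} The heart of the argument is the inequality, valid for all $v\in\dom(Q^{0}_{W})$ and with $u\geq 0$ fixed,
\begin{equation*}
\|v^{+}-u\|^2_{Q^{0}_{W}}\leq \|v-u\|^2_{Q^{0}_{W}}.
\end{equation*}
For the potential term this is pointwise: $|v^{+}(x)-u(x)|\leq |v(x)-u(x)|$ whenever $u(x)\geq 0$, since truncating $v$ at $0$ moves it no farther from the non-negative number $u(x)$; multiply by $\mu(x)(W(x)+1)\geq 0$ and sum. For the Dirichlet (edge) term one needs the elementary two-variable inequality
\begin{equation*}
|(v^{+}(x)-u(x))-(v^{+}(y)-u(y))|\leq |(v(x)-u(x))-(v(y)-u(y))|
\end{equation*}
for all real $v(x),v(y)$ and all $u(x),u(y)\geq 0$; multiplying by $b(x,y)\geq 0$, summing over $x,y$, and halving gives the edge part. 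This pointwise lemma is a standard fact about normal contractions / the Markovian property, and is essentially the analytic content behind the first Beurling--Deny condition already recorded in proposition~\ref{P:P-1}(ii) and corollary~\ref{C:C-1}(i); I would either cite it or dispatch it with the case analysis on the signs of $v(x)$ and $v(y)$ (four cases, each immediate). One subtlety: to even write $\|v^{+}-u\|_{Q^{0}_{W}}$ one must know $v^{+}\in\dom(Q^{0}_{W})$; but $v$ is finitely supported, so $v^{+}\in\xcomp\subseteq\dom(Q^{0}_{W})$, so there is nothing to check here, and the displayed contractivity inequality is then a finite/absolutely convergent computation.

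\textbf{Conclusion and the main obstacle.} Applying the contractivity inequality to $v=v_k$ gives $\|v_k^{+}-u\|_{Q^{0}_{W}}\leq\|v_k-u\|_{Q^{0}_{W}}\to 0$, and since each $v_k^{+}\in[\xcomp]^{+}$, this exhibits the required approximating sequence; hence $H^{0}_{W}$ has a positive form core. The routine part is the pointwise inequalities; the only place that requires a little care — and what I would regard as the main (minor) obstacle — is making sure all the bookkeeping about form domains and convergence of sums is legitimate, i.e. that $\|\cdot\|_{Q^{0}_{W}}$ really is (equivalent to) the expression above, that $\xcomp$ is dense in $\dom(Q^{0}_{W})$ in this norm (which is exactly what "$Q^{0}_{W}$ is the closure of $Q^{c}_{W}$" means, using proposition~\ref{P:P-1}(i)), and that $v^{+}_k$ stays in $\xcomp$ — all of which are structural rather than computational. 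No use of the finiteness condition beyond what is needed to make $Q^{0}_{W}$ and $H^{0}_{W}$ well-defined via proposition~\ref{P:P-1} is required for the argument itself.
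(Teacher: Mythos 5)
Your overall architecture is the same as the paper's: use the definition of $Q^{0}_{W}$ as the closure of $Q^{c}_{W}$ to get $v_k\in\xcomp$ with $v_k\to u$ in form norm, then positivize (the paper takes $|v_k|$ where you take $v_k^{+}$, an immaterial difference). But the key step is false as stated. The two-variable inequality
\begin{equation*}
|(v^{+}(x)-u(x))-(v^{+}(y)-u(y))|\leq |(v(x)-u(x))-(v(y)-u(y))|,\qquad u(x),u(y)\geq 0,
\end{equation*}
fails: take $u(x)=2$, $u(y)=0$, $v(x)=-1$, $v(y)=-2$, so that $v^{+}$ vanishes at both vertices; the left side is $|{-2}-0|=2$ while the right side is $|{-3}-({-2})|=1$. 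The asserted contractivity $\|v^{+}-u\|^{2}_{Q^{0}_{W}}\leq\|v-u\|^{2}_{Q^{0}_{W}}$ fails as well, not just edge by edge: on the two-point graph with a single edge of weight $b>3$, $\mu\equiv 1$, $W\equiv 0$, these data give $4b+4$ on the left and $b+13$ on the right. The point is that $v\mapsto v^{+}$ is a normal contraction of $v$ alone (which is what the first Beurling--Deny condition encodes); it is \emph{not} a contraction of the difference $v-u$ toward a fixed non-negative $u$, and the edge term is exactly where the two statements diverge.

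The conclusion you want, namely $v_k^{+}\to u$ in form norm, is nevertheless true, but it must be obtained by a weak-convergence/limsup argument rather than by termwise domination. Since $W\geq 0$, the normal contraction property applied to $v_k$ alone gives $Q_{W}(v_k^{+})\leq Q_{W}(v_k)\to Q_{W}(u)$, and the pointwise bound $|v_k^{+}(x)-u(x)|\leq|v_k(x)-u(x)|$ gives $v_k^{+}\to u$ in $\lw$. Hence $(v_k^{+})$ is bounded in the form Hilbert space and its only possible weak limit there is $u$, so $(v_k^{+},u)_{Q^{0}_{W}}\to\|u\|^{2}_{Q^{0}_{W}}$ and
\begin{equation*}
\limsup_{k\to\infty}\|v_k^{+}-u\|^{2}_{Q^{0}_{W}}\leq \limsup_{k\to\infty}\|v_k^{+}\|^{2}_{Q^{0}_{W}}-2\|u\|^{2}_{Q^{0}_{W}}+\|u\|^{2}_{Q^{0}_{W}}\leq 0.
\end{equation*}
This is also the content behind the paper's (equally terse) appeal to $||\alpha|-|\beta||\leq|\alpha-\beta|$: that inequality is applied to the single function $v_k$, to get $Q_{W}(|v_k|)\leq Q_{W}(v_k)$ and $\||v_k|-u\|\leq\|v_k-u\|$, not to differences of differences. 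So keep your structure, but replace the contractivity claim by the argument above.
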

\begin{proof} The operator $H^{0}_{W}$, with $W\geq 0$, is the Friedrichs extension of $\mathcal{L}_{W}|_{\xcomp}$. As the corresponding form $Q^{0}_{W}$ is obtained as the closure of $Q^{c}_{W}$, we have the following property: for all $u\in [\dom(Q^{0}_{W})]^{+}$ there exists a sequence $u_k\in \xcomp$ such that $u_k\to u$ in the form norm $\|\cdot\|_{Q^{0}_{W}}$. Denoting by $|\cdot|$ the modulus of a complex number, we see that $v_k:=|u_k|$ is a sequence of elements of $[\xcomp]^{+}$. By proposition 3.4 in~\cite{MS-18}, the form $Q^{0}_{W}$, with $W\geq 0$, is a restriction of the form $Q_{W}$ described at the beginning of section~\ref{SS:q-forms} above. Thus, using the inequality $||\alpha|-|\beta||\leq |\alpha-\beta|$, valid for all $\alpha,\beta\in\mathbb{C}$, we infer that $v_k\to u$ in the form norm $\|\cdot\|_{Q^{0}_{W}}$. This shows that $H^{0}_{W}$ has a positive form core.
\end{proof}

In the sequel we will use the following special case of theorem VIII.3.6 from~\cite{Kato80}:

\begin{prop}\label{P:kato-8} Let $\{t_k\}_{k\in\ZZ+}$ and $t$ be densely defined, closed, lower semi-bounded  forms in a Hilbert space $\mathscr{H}$ satisfying the following following properties:
\begin{enumerate}
  \item [(i)] $\dom(t_k)\subseteq\dom (t)$ for all $k\in\ZZ_{+}$;
  \item [(ii)] $t(u)\leq t_k(u)$  for all $u\in\dom(t_k)$ and all $k\in\ZZ_{+}$;
  \item [(iii)] there is a core $\mathscr{D}$ of $t$ such that $\mathscr{D}\subseteq \dom(t_k)$ for sufficiently large $k$;
  \item [(iv)] $\displaystyle\lim_{k\to\infty}t_k(u)=t(u)$, for all $u\in\mathscr{D}$.
\end{enumerate}
Let  $T_k$ and $T$ be the (lower semi-bounded) self-adjoint operators associated to $t_k$ and $t$ respectively. Then, for all $\alpha>-\lambda_0(t)$ and all $v\in\mathscr{H}$ we have
\begin{equation}\label{E:res-c}
(T_k+\alpha)^{-1}v\to (T+\alpha)^{-1}v,\quad \textrm{in }\mathscr{H}.
\end{equation}
\end{prop}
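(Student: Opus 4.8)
The plan is as follows. Since this proposition is a special case of \cite[Theorem~VIII.3.6]{Kato80}, one option is simply to check that hypotheses (i)--(iv) put us in the situation covered there and invoke it; I will instead sketch a short, self-contained argument of Mosco-convergence type, since it is brief and it isolates the role of each hypothesis. Fix $\alpha>-\lambda_0(t)$ and set $\delta:=\alpha+\lambda_0(t)>0$. Replacing the forms $t_k,t$ by $t_k+\alpha,t+\alpha$ (which replaces $T_k,T$ by $T_k+\alpha,T+\alpha$ and leaves (i)--(iv) intact) and using that $\lambda_0(t_k)\ge\lambda_0(t)$ by (i)--(ii), I may assume $\alpha=0$ and that every form is $\ge\delta>0$; then each $T_k,T$ is self-adjoint with $\|T_k^{-1}\|,\|T^{-1}\|\le\delta^{-1}$. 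Fix $v\in\mathscr{H}$, put $x_k:=T_k^{-1}v$ and $x:=T^{-1}v$, and let $\mathscr{H}_t:=(\dom(t),t(\cdot,\cdot))$ be the form Hilbert space (a genuine Hilbert space since $t$ is closed and $t\ge\delta>0$). The goal is $x_k\to x$ in $\mathscr{H}$.

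First I would produce a weak limit. Testing $T_kx_k=v$ against $x_k$ gives $t_k(x_k)=\langle v,x_k\rangle\le\delta^{-1}\|v\|^2$, and $t(x_k)\le t_k(x_k)$ by (ii); thus $\{x_k\}$ is bounded in $\mathscr{H}_t$, so a subsequence $x_{k_j}$ converges weakly in $\mathscr{H}_t$, hence weakly in $\mathscr{H}$, to some $y$. Next I would build a recovery sequence out of the core: since $\mathscr{D}$ is a core of $t$ and $x\in\dom(t)$, choose $z_n\in\mathscr{D}$ with $z_n\to x$ in the form norm of $t$; by (iii) each $z_n$ lies in $\dom(t_k)$ for all large $k$, and by (iv) $t_k(z_n)\to t(z_n)$ as $k\to\infty$, so a diagonal choice yields $w_k\in\dom(t_k)$ (for all large $k$) with $w_k\to x$ in $\mathscr{H}$ and $t_k(w_k)\to t(x)$.

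Now identify $y$ with $x$ variationally. Let $F_k(u):=t_k(u)-2\operatorname{Re}\langle v,u\rangle$ on $\dom(t_k)$ and $F(u):=t(u)-2\operatorname{Re}\langle v,u\rangle$ on $\dom(t)$; since the forms are $\ge\delta>0$, $F_k$ and $F$ are strictly convex and are minimized uniquely at $x_k$ and $x$. From the recovery sequence, $\limsup_k F_k(x_k)\le\lim_k F_k(w_k)=F(x)$. On the other hand, weak lower semicontinuity of the closed form $t$ together with (ii) gives $\liminf_j t_{k_j}(x_{k_j})\ge\liminf_j t(x_{k_j})\ge t(y)$, and since $\langle v,x_{k_j}\rangle\to\langle v,y\rangle$ this yields $\liminf_j F_{k_j}(x_{k_j})\ge F(y)\ge F(x)$. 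Hence all these quantities equal $F(x)$; in particular $F(y)=F(x)$, so $y=x$, and $\lim_j t_{k_j}(x_{k_j})=t(x)$. Combined with $t(x_{k_j})\le t_{k_j}(x_{k_j})$ and $\liminf_j t(x_{k_j})\ge t(x)$, this forces $t(x_{k_j})\to t(x)$, i.e.\ $\|x_{k_j}\|_{\mathscr{H}_t}\to\|x\|_{\mathscr{H}_t}$; weak convergence plus norm convergence in the Hilbert space $\mathscr{H}_t$ gives $x_{k_j}\to x$ in $\mathscr{H}_t$, a fortiori in $\mathscr{H}$. Since every subsequence of $\{x_k\}$ is bounded in $\mathscr{H}_t$ and hence has a further subsequence converging to $x$, the whole sequence converges, which is~(\ref{E:res-c}).

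The substantive step is the identification $y=x$: hypotheses (i)--(ii) only deliver the compactness used to extract $y$, whereas pinning $y$ down as the distinguished vector $x$ rests on the recovery inequality $\limsup_k t_k(x_k)\le t(x)$, and this is exactly where (iii)--(iv) are indispensable. (Had one also assumed monotonicity $t_{k+1}\le t_k$, the resolvents $T_k^{-1}$ would be monotone and bounded, hence strongly convergent, and (iii)--(iv) would only be needed to name the limit $T^{-1}$; without monotonicity, the $\liminf$/$\limsup$ sandwiching above is the natural substitute.)
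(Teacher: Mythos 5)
Your argument is correct, but it is genuinely different from what the paper does: the paper offers no proof at all for this proposition, simply recording it as a special case of theorem VIII.3.6 in \cite{Kato80} (whose hypotheses --- domain inclusion, domination from above, a common core on which the forms converge --- are exactly conditions (i)--(iv)). You instead give a self-contained variational (Mosco/$\Gamma$-convergence style) proof: coercivity after the shift by $\alpha$ plus the domination (i)--(ii) yields boundedness of $x_k=(T_k+\alpha)^{-1}v$ in the form space of $t$ and hence a weak cluster point; the core hypotheses (iii)--(iv) produce a recovery sequence giving $\limsup_k F_k(x_k)\le F(x)$; weak lower semicontinuity of the closed form plus uniqueness of the minimizer of $F$ identifies the cluster point as $x=(T+\alpha)^{-1}v$ and upgrades weak to strong convergence via convergence of the form norms. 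All the individual steps check out (the reduction to $\alpha=0$, the bound $t(x_k)\le t_k(x_k)\le\delta^{-1}\|v\|^2$, the diagonal construction of $w_k$, the sandwich $F(x)\ge\limsup F_k(x_k)\ge\liminf F_{k_j}(x_{k_j})\ge F(y)\ge F(x)$, and the subsequence principle at the end). What your route buys is transparency about which hypothesis does what --- as you note, (i)--(ii) give only compactness while (iii)--(iv) pin down the limit --- at the cost of length; the paper's route buys brevity by outsourcing the work to Kato. The only cosmetic gap is that you assert without proof that $x_k$ and $x$ are the unique minimizers of $F_k$ and $F$; this is the standard consequence of the first representation theorem (writing $F(u)=\|u-\hat v\|_{t}^2-\|\hat v\|_{t}^2$ with $\hat v$ the Riesz representative of $u\mapsto\langle v,u\rangle$ in the form space), and a one-line remark to that effect would make the proof airtight.
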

\begin{remark}\label{R:sr} The statement~(\ref{E:res-c}) is described in the literature as \emph{$T_k$ converges to $T$ in the strong resolvent sense}.
\end{remark}

We are now ready to tackle the positive form core property for the operator $L_{V_1}$ in theorem~\ref{T:main-1}.

\begin{prop}\label{P:P-2} Let $(X, b, \mu)$ be a weighted graph satisfying the property (FC). Assume that $V_1\in C(X)$ is a real-valued function satisfying the condition~(\ref{E:l-v-1}). Let $L_{V_1}$ be the Friedrichs extension of $\mathcal{L}_{V_1}|_{\xcomp}$. Then, $L_{V_1}$ has a positive form core.
\end{prop}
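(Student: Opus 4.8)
The plan is to obtain the positive form core property for the general lower semi-bounded operator $L_{V_1}$ by reducing to the non-negative-potential case covered by lemma~\ref{L-pfc-plus}, via a sequence of truncated potentials. First I would decompose $V_1 = V_1^{+} - V_1^{-}$, where $V_1^{\pm}$ are the positive and negative parts, and set $W_n := V_1^{+} - (V_1^{-} \wedge n) = V_1 + ((V_1^{-} - n) \vee 0)$. Each $W_n$ is bounded below by $-n$, but it is \emph{not} non-negative, so lemma~\ref{L-pfc-plus} does not apply directly to $W_n$; instead I would handle the bounded-below-negative-part obstruction by writing $W_n + n \geq 0$ and noting that $\mathcal{L}_{W_n}|_{\xcomp} = \mathcal{L}_{W_n + n}|_{\xcomp} - n\,\mathrm{Id}$, so the Friedrichs extension $H^{0}_{W_n}$ of $\mathcal{L}_{W_n}|_{\xcomp}$ equals $H^{0}_{W_n+n} - n\,\mathrm{Id}$, which has the same form domain and (up to the harmless additive constant in~(\ref{E:n-alpha})) the same form norm as $H^{0}_{W_n+n}$. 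Since $W_n + n \geq 0$, lemma~\ref{L-pfc-plus} gives that $H^{0}_{W_n+n}$, hence $H^{0}_{W_n}$, has a positive form core.

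Next I would compare $H^{0}_{W_n}$ with $L_{V_1}$ as $n \to \infty$. The associated closed forms are $h_1$ (for $L_{V_1}$, the closure of $u \mapsto (\mathcal{L}_{V_1}u,u)$ on $\xcomp$) and $q_n$ (the closure of $u \mapsto (\mathcal{L}_{W_n}u,u)$ on $\xcomp$); on $\xcomp$ one has, by Green's formula (lemma~\ref{L:L-1}), $q_n(u) = Q^{c}_{V_1}(u) + \sum_{x} \mu(x)\big((V_1^{-}(x)-n)\vee 0\big)|u(x)|^2 \geq Q^{c}_{V_1}(u) = h_1(u)$ on $\xcomp$, with $q_n(u) \downarrow h_1(u)$ pointwise and monotonically as $n \to \infty$. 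I would then verify the hypotheses of proposition~\ref{P:kato-8} with $t_k = q_k$, $t = h_1$, and core $\mathscr{D} = \xcomp$: domain inclusion $\dom(q_n) \subseteq \dom(h_1)$ follows because $W_n \geq V_1$ forces the $q_n$-form norm to dominate the $h_1$-form norm (after adjusting constants), $\xcomp$ is a core for both forms by construction, and monotone convergence on $\xcomp$ gives (iv). Thus $H^{0}_{W_n} \to L_{V_1}$ in the strong resolvent sense.

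To finish I would run the standard diagonal/approximation argument. Fix $u \in [\dom(h_1)]^{+}$. Choose $\beta > 0$ large and write $v_n := (H^{0}_{W_n}+\beta)^{-1}(L_{V_1}+\beta)u \in \dom(H^{0}_{W_n}) \subseteq [\dom(q_n)]^{+}$ — nonnegativity of $v_n$ follows from corollary~\ref{C:C-1}(ii) applied to $H^{0}_{W_n}$ (which is itself a Friedrichs extension satisfying (FC)), since $(L_{V_1}+\beta)u \geq 0$ whenever $u \geq 0$ and $u \in \dom(L_{V_1})$; for general $u \in [\dom(h_1)]^{+}$ one first approximates $u$ in $h_1$-norm by nonnegative $\dom(L_{V_1})$-vectors, e.g. by $(L_{V_1}+\beta)^{-1}(L_{V_1}+\beta)$-type resolvent regularizations, using corollary~\ref{C:C-1}(ii) again to preserve positivity. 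Strong resolvent convergence gives $v_n \to u$ in $\lw$, and a resolvent-identity computation shows $q_n(v_n - u_m)$ stays controlled, so $v_n \to u$ in the $h_1$-form norm along a subsequence (this is the familiar fact that strong resolvent convergence plus the monotonicity $q_n \geq h_1$ upgrades to form-norm convergence of these particular approximants). Finally, since each $H^{0}_{W_n}$ has a positive form core, approximate each $v_n$ in the $q_n$-norm — hence, because $q_n \geq h_1$ on the common domain, in the $h_1$-norm — by elements of $[\xcomp]^{+}$, and diagonalize to produce a sequence in $[\xcomp]^{+}$ converging to $u$ in $\|\cdot\|_{h_1}$. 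The main obstacle I anticipate is the last upgrade from strong resolvent convergence to form-norm convergence of the approximants $v_n$: one must be careful that the monotone-decreasing structure $q_n \downarrow h_1$ on $\xcomp$ (a genuine "form-decreasing" sequence) is exactly what makes this work, and the cleanest route is probably to quote or re-derive the monotone convergence theorem for forms (Kato VIII.3.11 / Simon's monotone convergence theorems) rather than extract it by hand from proposition~\ref{P:kato-8} alone.
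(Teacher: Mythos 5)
Your overall strategy---decompose $V_1 = V_1^{+} - V_1^{-}$, truncate the negative part, obtain the positive form core in the non-negative case from lemma~\ref{L-pfc-plus}, pass to the limit via the strong resolvent convergence of proposition~\ref{P:kato-8}, and use positivity preservation of resolvents---is exactly the paper's strategy, and several of the individual steps (the constant-shift argument making lemma~\ref{L-pfc-plus} applicable to $W_n$, which works because the form norm~(\ref{E:n-alpha}) is literally unchanged by adding a constant to the potential; the verification of the hypotheses of proposition~\ref{P:kato-8}) are sound. However, there is a genuine error in the final approximation step. You define $v_n := (H^{0}_{W_n}+\beta)^{-1}(L_{V_1}+\beta)u$ and claim $v_n \geq 0$ because ``$(L_{V_1}+\beta)u \geq 0$ whenever $u\geq 0$ and $u \in \dom(L_{V_1})$.'' That implication is false: positivity preservation (corollary~\ref{C:C-1}(ii)) is a property of the \emph{resolvent}, not of the operator. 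Already for $u = \delta_x \in [\xcomp]^{+}$ one computes $(\mathcal{L}_{V_1}+\beta)\delta_x(y) = -b(x,y)/\mu(y) < 0$ at every neighbour $y\sim x$, no matter how large $\beta$ is. So the nonnegativity of your approximants $v_n$ does not follow, and without it the reduction to the positive form cores of the $H^{0}_{W_n}$ collapses.

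The repair is to apply the truncated resolvent directly to the nonnegative vector $u$ rather than to $(L_{V_1}+\beta)u$: set $u^{k}_{r} := \left(r^{-1}H^{0}_{W_k}+1\right)^{-1}u$, which is nonnegative by positivity preservation and lies in $\dom(Q^{0}_{V_1^{(k)}}) = \dom(Q^{0}_{V_1^{+}})$, so that only the positive form core of $L_{V_1^{+}}$ (where lemma~\ref{L-pfc-plus} applies with no shift) is ever needed. The price is a second limiting parameter: one first sends $k\to\infty$ for fixed $r$ and then $r\to\infty$ to recover $u$. Moreover, the upgrade from strong resolvent convergence to form-norm convergence of these particular approximants---which you correctly flag as the main obstacle and propose to outsource to a monotone-convergence theorem for forms---is carried out in the paper by an explicit expansion of $\|u^{k}_{r}-u_r\|^2_{Q^0_{V_1+r}}$ using the monotonicity $Q^{0}_{V_1^{(k)}} \geq Q^{0}_{V_1}$ and Cauchy--Schwarz, which reduces everything to the scalar convergence $\left(\left(L_{V_1^{(k)}}+r\right)^{-1}u,u\right) \to \left(\left(L_{V_1}+r\right)^{-1}u,u\right)$ supplied by proposition~\ref{P:kato-8}; that computation (or a citation of the monotone convergence theorem for nonincreasing forms) still needs to be written out. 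With these two substitutions your argument becomes the paper's proof.
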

\begin{proof} We begin by writing down the definition of truncated potentials used in the sequel. Define $V^{+}_1:=\max\{V_1,0\}$, $V^{-}_1:=\max\{-V_1,0\}$, and note that $V_1=V^{+}_1-V^{-}_1$. For $k\in\ZZ_+$ define $V_1^{(k)}:=V^{+}_1-\min\{V^{-}_1,k\}$. Let $L_{V^{+}_1}$, $L_{V_1^{(k)}}$, and $L_{V_1}$ be the Friedrichs extensions of $\mathcal{L}_{V^{+}_1}|_{\xcomp}$, $\mathcal{L}_{V_1^{(k)}}|_{\xcomp}$, and  $\mathcal{L}_{V_1}|_{\xcomp}$ respectively. Following the notations of section~\ref{SS:q-forms}, denote the corresponding (closed, lower semi-bounded) quadratic forms by $Q^0_{V^{+}_1}$, $Q^0_{V_1^{(k)}}$, and $Q^0_{V_1}$.

\noindent\emph{Step 1.} Using proposition~\ref{P:kato-8} with $t= Q^0_{V_1}$ and $t_k=Q^0_{V^{(k)}_1}$, we will show that $L_{V_1^{(k)}}\to L_{V_1}$ in the strong resolvent sense (as described in remark~\ref{R:sr}).

In view of the condition~(\ref{E:l-v-1}), we see that $\dom(Q^0_{V^{+}_1})\subseteq \dom(Q^0_{V_1})$. Moreover, since $V^{+}_1-V_1^{(k)}$ belongs to $\lwi$, we see that $\dom(Q^0_{V^{+}_1})=\dom(Q^0_{V^{(k)}_1})$. Thus hypothesis (i) of proposition~\ref{P:kato-8} is satisfied. Furthermore, hypothesis (ii) of the same result is satisfied because $V_1^{(k)}\geq V_1$. By the definitions of the forms $Q^0_{V_1}$ and $Q^0_{V^{(k)}_1}$, hypotheses (iii) and (iv) of proposition~\ref{P:kato-8} are satisfied with $\mathscr{D}=\xcomp$. Thus, we obtain $L_{V_1^{(k)}}\to L_{V_1}$ in the strong resolvent sense.

\medskip

\noindent\emph{Step 2.} As $L_{V^{+}_1}$ has a positive form core (see lemma~\ref{L-pfc-plus}), for all $u\in [\dom(Q^0_{V^{+}_1})]^{+}$, there exists a sequence $\varphi_j\in [\xcomp]^{+}$ such that $\varphi_j \to u$ in the form norm of $Q^0_{V^{+}_1}$. Therefore, due to~(\ref{E:l-v-1}), $\varphi_j \to u$ in the form norm of $Q^0_{V_1}$.

\medskip

\noindent\emph{Step 3.} Based on step 2, the proof of the proposition will be complete after showing the following property: for all $u\in [\dom(Q^0_{V_1})]^{+}$ there exists a sequence $w_j\in [\dom(Q^0_{V^{+}_1})]^{+}$ such that $w_j\to u$ in the form norm of $\dom(Q^0_{V_1})$.
To make the exposition more readable, we assume that the form $Q^0_{V_1}$ is non-negative. With this assumption, the forms  $Q^0_{V^{(k)}_1}$ are also non-negative. In particular, we have $1>-\lambda_0(Q^0_{V_1})=-\lambda_0(L_{V_1})$, and, therefore, for $u\in [\dom(Q^0_{V_1})]^{+}$ and $k,r\in \ZZ_{+}$, the following definitions make sense:
\begin{equation*}
u_r:=\left(r^{-1}L_{V_1}+1\right)^{-1}u,
\end{equation*}
and
\begin{equation*}
u^{k}_r:=\left(r^{-1}L_{V_1^{(k)}}+1\right)^{-1}u.
\end{equation*}

Observe that $u^{k}_r\in \dom(L_{V_1^{(k)}})\subseteq \dom(Q^{0}_{V_1^{(k)}})=\dom(Q^{0}_{V_1^{+}})$. Moreover, in view of corollary~\ref{C:C-1}, we have $u^{k}_r\geq 0$, that is, $u^{k}_r\in[\dom(Q^{0}_{V_1^{+}})]^{+}$. Furthermore,
\begin{align}\label{e-1-2}
&\|u_r-u\|_{Q^0_{V_1}}=\|(L_{V_1}+1)^{1/2}(u_r-u)\|\nonumber\\
&=\left\|\left[\left(r^{-1}L_{V_1}+1\right)^{-1}-1\right](L_{V_1}+1)^{1/2}u\right\|\to 0,
\end{align}
as $r\to\infty$, where the second equality is true because $(L_{V_1}+1)^{1/2}$ and $\left(r^{-1}L_{V_1}+1\right)^{-1}$ commute, and the convergence relation holds because the sequence in $[\cdot]$ converges (in the strong operator sense) to $0$. Therefore, as $r\to\infty$, we have $u_r \to u$ in the form norm of $Q^0_{V_1}$.

Let $Q^0_{V_1+r}$ and $Q^0_{V^{(k)}_1+r}$ be the usual forms $Q^0_{W}$ corresponding to $W=V_1+r$ and $V=V^{(k)}_1+r$ with the form norms
\begin{equation}\label{f-norm-a}
\|w\|_{Q^0_{V_1+r}}^2:=Q^0_{V_1+r}(w)+ (1-r)\|w\|^2,
\end{equation}
\begin{equation*}
\|w\|_{Q^0_{V^{(k)}_1}}^2:=Q^0_{V^{(k)}_1+r}(w)+ (1-r)\|w\|^2.
\end{equation*}
We will next show that for a fixed $r\in\ZZ_{+}$, we have $u^{k}_r\to u_r$, as $k\to \infty$, in the form norm of $Q^0_{V_1}$, that is, the norm in~(\ref{f-norm-a}) corresponding to $r=0$.
We expand
\begin{align}\label{E:left-off-est}
&\left\|\left(L_{V_1^{(k)}}+r\right)^{-1}u-\left(L_{V_1}+r\right)^{-1}u\right\|_{Q^0_{V_1+r}}^2\nonumber\\
&=\left\|\left(L_{V_1^{(k)}}+r\right)^{-1}u\right\|_{Q^0_{V_1+r}}^2 +\left\|\left(L_{V_1}+r\right)^{-1}u\right\|_{Q^0_{V_1+r}}^2\nonumber\\
&-2\left(\left(L_{V_1^{(k)}}+r\right)^{-1}u,\left(L_{V_1}+r\right)^{-1}u\right)_{Q^0_{V_1+r}}
\end{align}
and note that
\begin{equation*}
\left\|\left(L_{V_1^{(k)}}+ r\right)^{-1}u\right\|_{Q^0_{V_1+r}}^2\leq
\left\|\left(L_{V_1^{(k)}}+ r\right)^{-1}u\right\|_{Q^0_{V_1^{(k)}+r}}^2.
\end{equation*}
Keeping in mind~(\ref{f-norm-a}) and the last inequality, we can further estimate and expand~(\ref{E:left-off-est}):
\begin{align}\label{E:left-off-est-2}
&\dots\leq \left\|\left(L_{V_1^{(k)}}+r\right)^{-1}u\right\|_{Q^0_{V^{(k)}_1+r}}^2
+\left\|\left(L_{V_1}+ r\right)^{-1}u\right\|_{Q^0_{V_1+r}}^2\nonumber\\
&-2\left(\left(L_{V_1^{(k)}}+r\right)^{-1}u,\left(L_{V_1}+ r\right)^{-1}u\right)_{Q^0_{V_1+r}}\nonumber\\
&=(1-r)\left\|\left(L_{V_1^{(k)}}+r\right)^{-1}u\right\|^2
+(1-r)\left\|\left(L_{V_1}+r\right)^{-1}u\right\|^2\nonumber\\
&-2(1-r)\left(\left(L_{V_1^{(k)}}+r\right)^{-1}u,\left(L_{V_1}+r\right)^{-1}u\right)\nonumber\\
&+\left(\left(L_{V_1}+r\right)^{-1}u,u\right)-\left(\left(L_{V_1^{(k)}}+r\right)^{-1}u,u\right)\nonumber\\
&\leq \left(\left(L_{V_1}+r\right)^{-1}u,u\right)-\left(\left(L_{V_1^{(k)}}+r\right)^{-1}u,u\right),
\end{align}
where the last inequality holds because the sum of the terms with factor $(1-r)$ is non-positive (as seen from Cauchy--Schwarz inequality and the assumption $r\geq 1$).
Taking the limit as $k\to\infty$ in~(\ref{E:left-off-est-2}) and recalling the strong resolvent convergence $L_{V_1^{(k)}}\to L_{V_1}$, we obtain (for a fixed $r\in\ZZ_{+}$)
\begin{equation*}
\|u^{k}_r\to u_r\|_{Q^0_{V_1+r}}=r\left\|\left(L_{V_1^{(k)}}+r\right)^{-1}u-\left(L_{V_1}+r\right)^{-1}u\right\|_{Q^0_{V_1+r}}\to 0.
\end{equation*}
The first norm in~(\ref{f-norm-a}) yields a family of equivalent form norms of $Q^0_{V_1}$. Therefore, for a fixed $r\in\ZZ_{+}$, we have $u^{k}_r\to u_r$ in the form norm of $Q^0_{V_1}$. Remembering the observation~(\ref{e-1-2}), we infer that there exists a subsequence $\{w_j\}$ of $\{u^{k}_{r}\}$ such that $w_j\to u$ in the form norm of $Q^0_{V_1}$.
\end{proof}

\section{Proof of Theorem~\ref{T:main-1}}
We begin this section with the following lemma for which we refer the reader to corollary 2.3 in~\cite{MS-18}:
\begin{lem}\label{L:L-2} Let $(X, b, \mu)$ be a weighted graph (not necessarily satisfying the property (FC)). Let $W\in C(X)$ and $W_0\in C(X)$ be functions such that $W(x)\geq W_0(x)$ for all $x\in X$. Let  $\mathcal{F}$ and $\mathcal{L}_{W}$ be as in~(\ref{E:def-f}) and~(\ref{E:magnetic-lap}) respectively. Assume that $f\in \mathcal{F}$ and $\beta\in\mathbb{R}$ satisfy $\mathcal{L}_{W}f=\beta f$. Then, we have the following (pointwise) inequality: $\mathcal{L}_{W_0}|f|\leq \beta |f|$.
\end{lem}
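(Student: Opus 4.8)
The plan is to derive the asserted pointwise inequality directly from the eigenvalue equation $\mathcal{L}_W f = \beta f$, using the ``sign trick'' that underlies Kato's inequality, while keeping careful track of absolute convergence. To begin, I would check that $\mathcal{L}_{W_0}|f|$ is well defined at every vertex: the function $|f|$ has the same moduli as $f$, so $f\in\mathcal{F}$ gives $|f|\in\mathcal{F}$, and in addition $\sum_{y}b(x,y)|f(x)|\le |f(x)|\sum_{y}b(x,y)<\infty$ by condition (b3). Hence every series occurring below converges absolutely and termwise manipulations are legitimate.

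Next I would introduce $\operatorname{sgn} z := z/|z|$ for $z\neq 0$ and $\operatorname{sgn} 0 := 0$, and record the elementary inequality, valid for all $a,b\in\mathbb{C}$,
\[
|a|-|b|\;\le\;\operatorname{Re}\bigl(\overline{\operatorname{sgn}(a)}\,(a-b)\bigr),
\]
which follows at once from $\operatorname{Re}(\overline{\operatorname{sgn}(a)}\,a)=|a|$ and $\operatorname{Re}(\overline{\operatorname{sgn}(a)}\,b)\le|\operatorname{sgn}(a)|\,|b|\le|b|$, and which is trivial when $a=0$.

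Applying this with $a=f(x)$ and $b=f(y)$ inside the sum over $y$, and pulling the constant $\overline{\operatorname{sgn}(f(x))}$ out of the sum, I would obtain
\[
\sum_{y\in X}b(x,y)\bigl(|f(x)|-|f(y)|\bigr)\;\le\;\operatorname{Re}\Bigl(\overline{\operatorname{sgn}(f(x))}\sum_{y\in X}b(x,y)\bigl(f(x)-f(y)\bigr)\Bigr).
\]
Then I would rewrite the equation $\mathcal{L}_W f=\beta f$ in the form $\sum_{y}b(x,y)(f(x)-f(y))=\mu(x)\bigl(\beta-W(x)\bigr)f(x)$, substitute it into the right-hand side, and use that $\beta-W(x)$ is real together with $\operatorname{Re}\bigl(\overline{\operatorname{sgn}(f(x))}f(x)\bigr)=|f(x)|$ to get
\[
\frac{1}{\mu(x)}\sum_{y\in X}b(x,y)\bigl(|f(x)|-|f(y)|\bigr)\;\le\;\bigl(\beta-W(x)\bigr)|f(x)|.
\]
Finally, adding $W_0(x)|f(x)|$ to both sides and invoking $W_0(x)\le W(x)$ gives $\mathcal{L}_{W_0}|f|(x)\le\bigl(\beta-W(x)+W_0(x)\bigr)|f(x)|\le\beta|f(x)|$, which is the claim.

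I do not expect a genuine obstacle here: the only points needing a little care are the convergence bookkeeping (handled by $f\in\mathcal{F}$ and (b3)) and the degenerate case $f(x)=0$, where the left-hand side equals $-\mu(x)^{-1}\sum_{y}b(x,y)|f(y)|\le 0=\beta|f(x)|$ so the inequality is immediate. This is exactly Corollary 2.3 of~\cite{MS-18}, specialized to the scalar, magnetic-potential-free situation.
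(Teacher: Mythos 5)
Your proof is correct: the sign-trick inequality $|a|-|b|\le\operatorname{Re}\bigl(\overline{\operatorname{sgn}(a)}\,(a-b)\bigr)$ is valid (including the case $a=0$ with the convention $\operatorname{sgn}0=0$), the convergence bookkeeping via $f\in\mathcal{F}$ and (b3) is handled properly, and the final step using $W_0\le W$ and $|f|\ge 0$ is sound. The paper itself gives no proof and simply cites corollary 2.3 of~\cite{MS-18}; your argument is a correct, self-contained reconstruction of the standard discrete Kato-inequality computation that underlies that cited result, so there is nothing to fix.
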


Assuming that $(X, b, \mu)$ satisfies the property (FC), we proceed with the proof of theorem~\ref{T:main-1}. Recall that the form associated to the operator $L_{V_1}\widetilde{+}M_{V_2}$ is the sum $Q^0_{V_1}+q_{V_2}$. Additionally, note that $(Q^0_{V_1}+q_{V_2})|_{\xcomp}=Q^{c}_{V}$, with $V=V_1+V_2$. Therefore, the (closed) form $Q^0_{V_1}+q_{V_2}$ is an extension of the form $Q^0_{V}$, where $Q^0_{V}$ is as in the text above section~\ref{SS:pfc}. Thus, in order to show the equality $L_{V_1}\widetilde{+}M_{V_2}=L_{V}$, it is enough to show that $\xcomp$ is dense in $\dom (Q^0_{V_1}+q_{V_2})=\dom(Q^0_{V_1})\cap \dom(q_{V_2})$ with respect to the norm
\begin{equation*}
\|w\|_{V}^2:=Q^{0}_{V_1}(w)+q_{V_2}(w)+\alpha\|w\|^2,
\end{equation*}
where $\alpha>-\lambda_0(Q^{0}_{V_1})$.

We will accomplish this by showing that if $u\in \dom(Q^0_{V_1})\cap \dom(q_{V_2})$ satisfies
\begin{equation}\label{hyp-u}
Q^{0}_{V_1}(u,v)+q_{V_2}(u,v)+\alpha(u,v)=0,
\end{equation}
for all $v\in\xcomp$, then $u=0$.

By lemma~\ref{L:L-1} and remark~\ref{R:R-green}, we can rewrite~(\ref{hyp-u}) as
\begin{equation*}
0=(\mathcal{L}_{V_1}u,v)_{a}+(V_2u,v)_{a}+\alpha(u,v)=(\mathcal{L}_{V}u,v)_{a}+\alpha(u,v),
\end{equation*}
where $(\cdot,\cdot)_{a}$ is as in~(\ref{E:a-d}).

Substituting $v=\delta_{x}$, where $\delta_{x}(y)=1$ if $x=y$ and $\delta_{x}(y)=0$ otherwise, we get
$\mathcal{L}_{V}u=-\alpha u$. Keeping in mind that $V_2\geq 0$ and $V_1=V_1+V_2$, we see that $V\geq V_1$. Hence, by lemma~\ref{L:L-2}, the equation $\mathcal{L}_{V}u=-\alpha u$ leads to $\mathcal{L}_{V_1}|u|\leq -\alpha|u|$, that is,
\begin{equation}\label{E:temp-1}
((\mathcal{L}_{V_1}+\alpha)|u|, v)_{a}\leq 0,
\end{equation}
for all $v\in [\xcomp]^{+}$.

As  $u\in \dom(Q^0_{V_1})$ and since the form $Q^c_{V_1}$ is lower semi-bounded,
we can use corollary~\ref{C:C-1} to infer that $|u|\in \dom(Q^0_{V_1})$. With this information at hand, lemma~\ref{L:L-1} enables us to rewrite~(\ref{E:temp-1}) as
\begin{equation}\label{temp-2}
(|u|, v)_{Q^0_{V_1}}\leq 0,
\end{equation}
for all $v\in [\xcomp]^{+}$, where $(\cdot,\cdot)_{Q^0_{V_1}}$ is the inner product corresponding to the form norm of $Q^0_{V_1}$, as described in~(\ref{E:n-alpha}).

Defining $w:=\left(L_{V_1}+\alpha\right)^{-1}|u|$, we see that $w\in [\dom(L_{V_1})]^{+}\subseteq [\dom(Q^0_{V_1})]^{+}$ because $\left(L_{V_1}+\alpha\right)^{-1}$ is positivity preserving in view of corollary~\ref{C:C-1}. Furthermore, by proposition~\ref{P:P-2}, $L_{V_1}$ has a positive form core, which means that there exists $w_k\in [\xcomp]^{+}$ such that $w_k\to w$, as $k\to\infty$, in the form norm of $Q^0_{V_1}$. Therefore, keeping in mind~(\ref{temp-2}), we have
\begin{equation*}
0\geq \displaystyle \lim_{k\to\infty}(|u|, w_k)_{Q^0_{V_1}}=(|u|, w)_{Q^0_{V_1}}=(|u|, \left(L_{V_1}+\alpha\right)^{-1}|u|)_{Q^0_{V_1}}=(|u|,|u|),
\end{equation*}
which shows that $u=0$. $\hfill\square$

\section{Proof of theorem~\ref{T:main-2}} Throughout this section, $\overline{G}$ is the closure of a (closable) operator $G$ in $\lw$. To simplify the discussion, without losing generality we assume that~(\ref{E:l-v-1}) is satisfied with $C=0$.
Let $L_{V_1}$ and $L_{V}$ be the Friedrichs extensions of $\mathcal{L}_{V_1}|_{\xcomp}$ and $\mathcal{L}_{V}|_{\xcomp}$ respectively, and let $M_{V_2}$ be the maximal multiplication operator in $\lw$ corresponding to $V_2$, with the domain as in~(\ref{d-v-2}).

%By hypothesis on $\mathcal{L}_{V_1}|_{\xcomp}$, we have $\overline{\mathcal{L}_{V_1}|_{\xcomp}}=L_{V_1}$.

We begin with the following lemma:
\begin{lem}\label{L:2-3} Assume that the hypotheses of theorem~\ref{T:main-2} are satisfied. Then, $\dom(L_{V_1})\subseteq\dom(\overline{\mathcal{L}_{V}|_{\xcomp}})$.
Moreover, the following equality holds for all $u\in \dom (L_{V_1})$:
\begin{equation}\label{E:l-v-1-2-new}
L_{V}u=L_{V_1}u+M_{V_2}u,
\end{equation}
where $M_{V_2}$ is as in~(\ref{d-v-2}).
\end{lem}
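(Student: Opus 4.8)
The plan is to first establish the inclusion $\dom(L_{V_1})\subseteq\dom(M_{V_2})$, and then use this together with the $\mathcal{L}_{V_1}$-boundedness hypothesis~(\ref{E:a-1-2}) to produce, for a given $u\in\dom(L_{V_1})$, a sequence in $\xcomp$ witnessing $u\in\dom(\overline{\mathcal{L}_{V}|_{\xcomp}})$ and yielding the operator identity~(\ref{E:l-v-1-2-new}). For the first inclusion, I would argue as follows. Since $\mathcal{L}_{V_1}|_{\xcomp}$ is essentially self-adjoint, its closure is self-adjoint, and because the Friedrichs extension $L_{V_1}$ is a self-adjoint extension of $\mathcal{L}_{V_1}|_{\xcomp}$, we must have $L_{V_1}=\overline{\mathcal{L}_{V_1}|_{\xcomp}}$. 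In particular $\xcomp$ is a core for $L_{V_1}$ in the graph norm. Hence for $u\in\dom(L_{V_1})$ there exist $u_k\in\xcomp$ with $u_k\to u$ and $\mathcal{L}_{V_1}u_k\to L_{V_1}u$ in $\lw$. Applying~(\ref{E:a-1-2}) to the differences $u_k-u_j$ gives
\begin{equation*}
\|V_2 u_k-V_2 u_j\|\leq a_1\|\mathcal{L}_{V_1}u_k-\mathcal{L}_{V_1}u_j\|+a_2\|u_k-u_j\|,
\end{equation*}
so $\{V_2u_k\}$ is Cauchy in $\lw$; let $g$ be its limit. Passing to a pointwise-convergent subsequence shows $g=V_2u$, whence $V_2u\in\lw$, i.e.\ $u\in\dom(M_{V_2})$, and moreover $M_{V_2}u_k\to M_{V_2}u$ in $\lw$.

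With both facts in hand, fix $u\in\dom(L_{V_1})$ and the sequence $u_k\in\xcomp$ as above. Then, since $\mathcal{L}_{V}=\mathcal{L}_{V_1}+V_2$ on $\xcomp$,
\begin{equation*}
\mathcal{L}_{V}u_k=\mathcal{L}_{V_1}u_k+V_2u_k\longrightarrow L_{V_1}u+M_{V_2}u\quad\text{in }\lw,
\end{equation*}
while $u_k\to u$. This shows $u\in\dom(\overline{\mathcal{L}_{V}|_{\xcomp}})$ and $\overline{\mathcal{L}_{V}|_{\xcomp}}\,u=L_{V_1}u+M_{V_2}u$. It remains to identify $\overline{\mathcal{L}_{V}|_{\xcomp}}\,u$ with $L_{V}u$ for such $u$. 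Here I would invoke that $L_{V}$ is the Friedrichs extension of $\mathcal{L}_{V}|_{\xcomp}$, hence a self-adjoint extension, so $\overline{\mathcal{L}_{V}|_{\xcomp}}\subseteq L_{V}$; consequently $L_{V}u=\overline{\mathcal{L}_{V}|_{\xcomp}}\,u=L_{V_1}u+M_{V_2}u$, which is~(\ref{E:l-v-1-2-new}).

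The step I expect to be the main obstacle is the passage to the limit in $\mathcal{L}_{V}u_k$, specifically making rigorous that the pointwise limit of $V_2u_k$ equals $V_2u$. Since $u_k\to u$ in $\lw$ we have pointwise convergence along a subsequence, and multiplication by the fixed function $V_2$ is continuous pointwise, so this is routine; the only care needed is to thread the same subsequence through the $\lw$-convergence of $\{V_2u_k\}$ so that both limits agree. A secondary subtlety is confirming that essential self-adjointness of $\mathcal{L}_{V_1}|_{\xcomp}$ forces $L_{V_1}=\overline{\mathcal{L}_{V_1}|_{\xcomp}}$: this follows because the Friedrichs extension is always a self-adjoint restriction of the adjoint $(\mathcal{L}_{V_1}|_{\xcomp})^{*}$, and when the operator is essentially self-adjoint there is only one such restriction, namely the closure. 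Everything else is bookkeeping with Green's formula (Lemma~\ref{L:L-1}) and the (FC) assumption, which ensures all the objects live in $\lw$.
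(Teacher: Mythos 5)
Your proposal is correct and follows essentially the same route as the paper: use essential self-adjointness to get $L_{V_1}=\overline{\mathcal{L}_{V_1}|_{\xcomp}}$, approximate $u\in\dom(L_{V_1})$ by a sequence in $\xcomp$, apply~(\ref{E:a-1-2}) to see that $\{V_2u_k\}$ is Cauchy, identify its limit with $V_2u$ (your pointwise-subsequence step is just an unpacking of the paper's appeal to the closedness of $M_{V_2}$), and conclude via the fact that the Friedrichs extension $L_{V}$ extends $\overline{\mathcal{L}_{V}|_{\xcomp}}$. The paper's proof is merely a terser version of the same argument, so no further comparison is needed.
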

\begin{proof} By the hypothesis on $\mathcal{L}_{V_1}|_{\xcomp}$, we have
\begin{equation}\label{temp-closure}
\overline{\mathcal{L}_{V_1}|_{\xcomp}}=L_{V_1}
\end{equation}
With this in mind, the property $\dom (L_{V_1})\subseteq\dom\left(\overline{\mathcal{L}_{V}|_{\xcomp}}\right)$ follows from the hypothesis~(\ref{E:a-1-2}). Having established this inclusion, the property~(\ref{E:l-v-1-2-new}) follows easily if we take into account~(\ref{temp-closure}) and the following two facts: the operator $M_{V_2}$ is closed and $L_{V}$ is an extension of  $\overline{\mathcal{L}_{V}|_{\xcomp}}$, where the latter property is true because $L_{V}$ the Friedrichs extension of $\mathcal{L}_{V}|_{\xcomp}$.
\end{proof}

The proof of theorem~\ref{T:main-2} rests on the following proposition, whose proof occupies the remainder of this section.
\begin{prop}\label{P:2-1} Assume that the hypotheses of theorem~\ref{T:main-2} are satisfied. Then,
\begin{enumerate}
  \item [(i)] $\dom(L_{V_1})\subseteq \dom(L_{V})$;
  \item [(ii)] for all  $u\in\dom(L_{V})$, there exists a sequence $w_j\in \dom(L_{V_1})$ such that $w_j\to u$ and $L_{V}w_j\to L_{V}u$, where both limits are understood with respect to the norm of $\lw$.
\end{enumerate}
\end{prop}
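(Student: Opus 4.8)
The plan is to read off part (i) from Lemma~\ref{L:2-3} and to prove part (ii) by a two-parameter regularization in which the non-negative potential $V_2$ is truncated, followed by a diagonal extraction. Throughout, as elsewhere in the section, assume \eqref{E:l-v-1} holds with $C=0$, so $L_{V_1}$, $L_V$ and $\mathcal{L}_V|_{\xcomp}$ are non-negative. Part (i) is immediate: Lemma~\ref{L:2-3} gives $\dom(L_{V_1})\subseteq\dom(\overline{\mathcal{L}_V|_{\xcomp}})\subseteq\dom(L_V)$, and $L_Vw=L_{V_1}w+M_{V_2}w$ for $w\in\dom(L_{V_1})$.

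For part (ii), set $V_2^{(k)}:=\min\{V_2,k\}$ and $V^{(k)}:=V_1+V_2^{(k)}$ for $k\in\ZZ_+$. Since $M_{V_2^{(k)}}$ is bounded and $\mathcal{L}_{V_1}|_{\xcomp}$ is essentially self-adjoint, $\mathcal{L}_{V^{(k)}}|_{\xcomp}$ is essentially self-adjoint with closure $L_{V^{(k)}}=L_{V_1}+M_{V_2^{(k)}}$ defined on $\dom(L_{V_1})$, and for fixed $r>0$ one has $(L_{V^{(k)}}+r)^{-1}\to(L_V+r)^{-1}$ strongly as $k\to\infty$ (the forms $h_1+q_{V_2^{(k)}}$ increase to $h_1+q_{V_2}$, which by Theorem~\ref{T:main-1} is the form of $L_V$; this follows from the monotone convergence theorem for non-decreasing forms, or may be checked directly via positivity-preservation and the form representation theorem with $\xcomp$ as form core). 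I will use two pointwise facts about these resolvents. First, positivity-preservation (Corollary~\ref{C:C-1}, Proposition~\ref{P:P-1}), together with discreteness of $X$, gives $|(L_{V^{(k)}}+r)^{-1}v|\le(L_{V^{(k)}}+r)^{-1}|v|$ for $v\in\lw$. Second, since $V^{(k)}\ge V_1$ while $L_{V^{(k)}}$ and $L_{V_1}$ share the domain $\dom(L_{V_1})$, applying $(\mathcal{L}_{V_1}+r)$ to $(L_{V_1}+r)^{-1}f-(L_{V^{(k)}}+r)^{-1}f$ for $0\le f\in\lw$ (using Green's formula and a second application of positivity-preservation) yields the resolvent domination $0\le(L_{V^{(k)}}+r)^{-1}f\le(L_{V_1}+r)^{-1}f$. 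Finally, since $\xcomp$ is a core for $L_{V_1}$, the bound \eqref{E:a-1-2} extends to $\|M_{V_2}w\|\le a_1\|L_{V_1}w\|+a_2\|w\|$ for all $w\in\dom(L_{V_1})$; in particular $\dom(L_{V_1})\subseteq\dom(M_{V_2})$.

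Now fix $u\in\dom(L_V)$ and put $u_r:=r(L_V+r)^{-1}u$ and $u^k_r:=r(L_{V^{(k)}}+r)^{-1}u$ for $r,k\in\ZZ_+$. Then $u^k_r\in\dom(L_{V^{(k)}})=\dom(L_{V_1})\subseteq\dom(L_V)$ by part (i). As $r\to\infty$, $u_r\to u$ and $L_Vu_r=r(u-u_r)\to L_Vu$ in $\lw$ by the standard resolvent facts. For fixed $r$, $u^k_r\to u_r$ in $\lw$ as $k\to\infty$, and by part (i), Lemma~\ref{L:2-3}, and the form of $L_{V^{(k)}}$,
\begin{equation*}
L_Vu^k_r=L_{V^{(k)}}u^k_r+(V_2-V_2^{(k)})u^k_r=r(u-u^k_r)+(V_2-k)_{+}u^k_r .
\end{equation*}
The first summand tends to $r(u-u_r)=L_Vu_r$, while the second tends to $0$ in $\lw$: combining the two pointwise facts,
\[
\bigl|(V_2-k)_{+}u^k_r\bigr|\le V_2|u^k_r|\le V_2\, r(L_{V^{(k)}}+r)^{-1}|u|\le V_2\, r(L_{V_1}+r)^{-1}|u|=V_2\,\widetilde u_r,
\]
where $\widetilde u_r:=r(L_{V_1}+r)^{-1}|u|\in\dom(L_{V_1})\subseteq\dom(M_{V_2})$, so that $V_2\widetilde u_r\in\lw$; since $(V_2-k)_{+}u^k_r\to0$ pointwise, dominated convergence applies. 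Hence $u^k_r\to u_r$ in the graph norm of $L_V$ for each $r$, and a diagonal choice $k=k(r)$ with $\|u^{k(r)}_r-u_r\|+\|L_Vu^{k(r)}_r-L_Vu_r\|<1/r$ yields $w_r:=u^{k(r)}_r\in\dom(L_{V_1})$ with $w_r\to u$ and $L_Vw_r\to L_Vu$ in $\lw$, which is (ii).

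The main obstacle is that $a_1$ in \eqref{E:a-1-2} is not assumed to be less than $1$, so no Kato--Rellich-type boundedness is available and the relative bound alone does not control $M_{V_2}$ along a naive approximating sequence such as $(j^{-1}L_{V_1}+1)^{-1}u$. The device that defeats this is the truncation of $V_2$ together with the domination $|u^k_r|\le\widetilde u_r$ by the single, $k$-independent function $\widetilde u_r\in\dom(M_{V_2})$; establishing that domination is exactly where positivity-preservation of the resolvents and the stability of $\dom(L_{V^{(k)}})=\dom(L_{V_1})$ under the bounded perturbation $M_{V_2^{(k)}}$ are essential.
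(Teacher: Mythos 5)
Your argument is correct and follows essentially the same route as the paper's proof: part (i) is read off from Lemma~\ref{L:2-3}, and part (ii) is obtained by truncating $V_2$, regularizing with the resolvents $r(T_k+r)^{-1}$ of $T_k=L_{V_1}+M_{V_2^{(k)}}$, dominating $|u^k_r|$ by the $k$-independent majorant $r(L_{V_1}+r)^{-1}|u|\in\dom(M_{V_2})$ via positivity preservation and resolvent domination, applying dominated convergence, and finishing with a diagonal extraction. The only minor divergence is that you justify $u^k_r\to u_r$ ($k\to\infty$) by the monotone convergence theorem for non-decreasing forms combined with Theorem~\ref{T:main-1}, whereas the paper obtains it more directly (and without invoking Theorem~\ref{T:main-1}) from the resolvent identity $u^k_r-(r^{-1}L_{V}+1)^{-1}u=r^{-1}(r^{-1}L_{V}+1)^{-1}(V_2-V_2^{(k)})u^k_r$ together with the same dominated-convergence majorant.
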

\begin{rem} As $L_{V}$ is an extension of $\overline{\mathcal{L}_{V}|_{\xcomp}}$, property (i) follows right away from lemma~\ref{L:2-3}.
\end{rem}

To prove the property (ii) of this proposition, we will use truncated operators. For $k\in\ZZ_+$, define $V_{2}^{(k)}:=\min\{V_2,k\}$ and $T_{k}:=L_{V_1}+M_{V_{2}^{(k)}}$, where $M_{V_{2}^{(k)}}$ is as in~(\ref{d-v-2}) with $V_2$ replaced by $V_{2}^{(k)}$, and the sum is understood in the operator sense. Note that $\dom(T_k)=\dom(L_{V_1})\cap \dom(M_{V_{2}^{(k)}})=\dom(L_{V_1})$, where the last equality is true because $V_{2}^{(k)}\in\lwi$.

As $M_{V_{2}^{(k)}}$ is a bounded operator in~$\lw$, the following lemma is a simple consequence of theorem V.4.4 in~\cite{Kato80}, known as Kato--Rellich theorem:

\begin{lem}\label{L:2-1} Assume that the hypotheses of theorem~\ref{T:main-2} are satisfied. Then, for all $k\in\ZZ+$ the operator $T_{k}$ is self-adjoint, and $T_k=\overline{\mathcal{L}_{V_1+V_{2}^{(k)}}|_{\xcomp}}$.
\end{lem}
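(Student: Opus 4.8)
The plan is to combine the essential self-adjointness hypothesis on $\mathcal{L}_{V_1}|_{\xcomp}$ with the boundedness of the truncated multiplication operator, invoking the Kato--Rellich theorem for self-adjointness and a graph-norm comparison for the core statement. First I would record that, since $\mathcal{L}_{V_1}|_{\xcomp}$ is essentially self-adjoint by hypothesis, its Friedrichs extension coincides with its closure, so $L_{V_1}=\overline{\mathcal{L}_{V_1}|_{\xcomp}}$ is self-adjoint, $\xcomp\subseteq\dom(L_{V_1})$, and $\xcomp$ is a core for $L_{V_1}$. Next, since $0\leq V_{2}^{(k)}=\min\{V_2,k\}\leq k$, the operator $M_{V_{2}^{(k)}}$ is everywhere defined, bounded, and self-adjoint on $\lw$, with $\|M_{V_{2}^{(k)}}\|\leq k$. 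In particular $M_{V_{2}^{(k)}}$ is $L_{V_1}$-bounded with relative bound $0<1$, so theorem V.4.4 in~\cite{Kato80} applies and yields that $T_{k}=L_{V_1}+M_{V_{2}^{(k)}}$ is self-adjoint on $\dom(T_k)=\dom(L_{V_1})$.

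For the identification $T_k=\overline{\mathcal{L}_{V_1+V_{2}^{(k)}}|_{\xcomp}}$, I would first observe that for $u\in\xcomp$ one has $\mathcal{L}_{V_1+V_{2}^{(k)}}u=\mathcal{L}_{V_1}u+V_{2}^{(k)}u=L_{V_1}u+M_{V_{2}^{(k)}}u=T_{k}u$, with the left-hand side lying in $\lw$ by property (FC) (lemma 3.15 in~\cite{MS-18}); hence $\mathcal{L}_{V_1+V_{2}^{(k)}}|_{\xcomp}=T_{k}|_{\xcomp}$. Since $T_k$ is self-adjoint, hence closed, it remains only to show that $\xcomp$ is a core for $T_k$. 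Because $M_{V_{2}^{(k)}}$ is bounded, the graph norm of $T_k$ and the graph norm of $L_{V_1}$ are equivalent on the common domain $\dom(L_{V_1})=\dom(T_k)$; consequently a subspace of $\dom(L_{V_1})$ is a core for $T_k$ if and only if it is a core for $L_{V_1}$. Since $\xcomp$ is a core for $L_{V_1}$ by the first step, it is a core for $T_k$, and therefore $\overline{\mathcal{L}_{V_1+V_{2}^{(k)}}|_{\xcomp}}=\overline{T_{k}|_{\xcomp}}=T_k$.

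This argument is essentially routine; the one point that deserves a little care is the core statement, namely the equivalence of the two graph norms together with the inclusion $\xcomp\subseteq\dom(L_{V_1})$. Both of these are immediate once the boundedness of $M_{V_{2}^{(k)}}$ and the essential self-adjointness of $\mathcal{L}_{V_1}|_{\xcomp}$ are in place, so I do not anticipate a genuine obstacle here — the lemma is really just the correct packaging of Kato--Rellich for the truncated potentials, preparing the stage for the limiting argument $k\to\infty$ in the proof of proposition~\ref{P:2-1}.
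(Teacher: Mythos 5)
Your proposal is correct and follows exactly the route the paper takes: the paper simply observes that $M_{V_{2}^{(k)}}$ is bounded and cites theorem V.4.4 of Kato (Kato--Rellich), and your write-up supplies the same argument with the details (identification $L_{V_1}=\overline{\mathcal{L}_{V_1}|_{\xcomp}}$, graph-norm equivalence, and the core transfer) made explicit. No gaps.
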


For $u\in\dom(L_{V})$ and $k,r\in \ZZ_{+}$, define
\begin{equation}\label{u-k-r-t-k}
u^{k}_r:=\left(r^{-1}T_k+1\right)^{-1}u.
\end{equation}
In view of the hypothesis~(\ref{E:l-v-1}) with $C=0$ and since $V_{2}^{(k)}\geq 0$, we have $1>-\lambda_0(T_k)$; hence, this definition makes sense. Observe that $u^{k}_r\in\dom(T_k)=\dom (L_{V_1})$.

\begin{lem}\label{L:2-2} Assume that the hypotheses of theorem~\ref{T:main-2} are satisfied. Let $u\in\dom(L_{V})$ and let $u^{k}_r$ be as in~(\ref{u-k-r-t-k}). Then, $|u^{k}_r|\leq \left(r^{-1}L_{V_1}+1\right)^{-1}|u|$.
\end{lem}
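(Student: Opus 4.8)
The plan is to convert the resolvent identity defining $u^{k}_r$ into a pointwise differential inequality for $|u^{k}_r|$ by means of Kato's inequality, and then to upgrade that differential inequality to the asserted pointwise bound by pairing it against a suitable test function and exploiting the positivity-preserving property of $(L_{V_1}+r)^{-1}$ together with the positive form core property of $L_{V_1}$ (proposition~\ref{P:P-2}). First I would note that $f:=u^{k}_r\in\dom(T_k)=\dom(L_{V_1})\subseteq\lw\subseteq\mathcal{F}$, the last inclusion by remark~\ref{R:R-green}. Since $L_{V_1}$ is the self-adjoint operator associated with $Q^0_{V_1}$, Green's formula (lemma~\ref{L:L-1}) and testing against $\delta_x$ give $L_{V_1}w=\mathcal{L}_{V_1}w$ pointwise for every $w\in\dom(L_{V_1})$; hence $T_kf=\mathcal{L}_{V_1+V_{2}^{(k)}}f$ pointwise, and the relation $(r^{-1}T_k+1)f=u$ coming from~\eqref{u-k-r-t-k} becomes
\begin{equation*}
\mathcal{L}_{V_1+V_{2}^{(k)}+r}\,f=ru\qquad\text{pointwise on }X,
\end{equation*}
where the subscript $+r$ indicates addition of the constant potential $r$.

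Next I would apply Kato's inequality for the discrete Schr\"odinger operator from~\cite{MS-18}: for $\phi\in\mathcal{F}$ one has $\mathcal{L}_{0}|\phi|\le\operatorname{Re}\bigl(\overline{\operatorname{sgn}\phi}\,\mathcal{L}_{0}\phi\bigr)$ pointwise, where $\mathcal{L}_0$ is $\mathcal{L}_V$ with $V\equiv 0$. Taking $\phi=f$, adding $(V_1+r)|f|$ to both sides (legitimate since $V_1+r$ is real), rewriting $\mathcal{L}_0f$ via $\mathcal{L}_{V_1+V_{2}^{(k)}+r}f=ru$, and using $V_{2}^{(k)}\ge 0$ together with $\operatorname{Re}(\overline{\operatorname{sgn}f}\,u)\le|u|$, I obtain
\begin{equation*}
(\mathcal{L}_{V_1}+r)|f|\;\le\;r|u|\qquad\text{pointwise on }X. \tag{$\star$}
\end{equation*}
(Note that lemma~\ref{L:L-2} does not apply here, since the right-hand side $ru$ is not an eigenfunction equation for $f$.)

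Now set $g:=(r^{-1}L_{V_1}+1)^{-1}|u|=r(L_{V_1}+r)^{-1}|u|$, which is meaningful because, after the reduction $C=0$ in~\eqref{E:l-v-1}, we have $r>0\ge-\lambda_0(L_{V_1})$. By corollary~\ref{C:C-1}, $g\ge0$, and $(\mathcal{L}_{V_1}+r)g=r|u|$ pointwise as above. Writing $\psi:=|f|-g$, we have $|f|\in\dom(Q^0_{V_1})$ (first Beurling--Deny condition, corollary~\ref{C:C-1}) and $g\in\dom(L_{V_1})\subseteq\dom(Q^0_{V_1})$, so $\psi\in\dom(Q^0_{V_1})$. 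Pairing $(\star)$ and the equation for $g$ against an arbitrary $v\in[\xcomp]^{+}$ and converting $(\mathcal{L}_{V_1}\,\cdot\,,v)_{a}$ into $Q_{V_1}(\,\cdot\,,v)=Q^0_{V_1}(\,\cdot\,,v)$ by lemma~\ref{L:L-1} (exactly as in the proof of theorem~\ref{T:main-1}), one arrives at $(\psi,v)_{Q^0_{V_1},\,r}\le 0$ for all $v\in[\xcomp]^{+}$, where $(\,\cdot\,,\cdot\,)_{Q^0_{V_1},r}:=Q^0_{V_1}(\,\cdot\,,\cdot\,)+r(\,\cdot\,,\cdot\,)$ is the inner product of a form norm of $Q^0_{V_1}$. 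To conclude, I would test against $\widetilde w:=(L_{V_1}+r)^{-1}\psi_{+}$ with $\psi_{+}:=\max\{\psi,0\}\in\lw$: by corollary~\ref{C:C-1}, $\widetilde w\ge0$ and $\widetilde w\in\dom(L_{V_1})\subseteq\dom(Q^0_{V_1})$, so by proposition~\ref{P:P-2} there are $v_j\in[\xcomp]^{+}$ with $v_j\to\widetilde w$ in the form norm of $Q^0_{V_1}$; passing to the limit and using the identity $(\psi,(L_{V_1}+r)^{-1}\chi)_{Q^0_{V_1},r}=(\psi,\chi)$ (valid for $\psi\in\dom(Q^0_{V_1})$, $\chi\in\lw$), I get
\begin{equation*}
\|\psi_{+}\|^2=(\psi,\psi_{+})=(\psi,\widetilde w)_{Q^0_{V_1},r}=\lim_{j\to\infty}(\psi,v_j)_{Q^0_{V_1},r}\le 0,
\end{equation*}
the first equality because $\psi_{+}\psi_{-}=0$ pointwise. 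Hence $\psi_{+}=0$, i.e.\ $|u^{k}_r|=|f|\le g=(r^{-1}L_{V_1}+1)^{-1}|u|$.

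I expect the main obstacle to be step $(\star)$: applying Kato's inequality with a genuinely inhomogeneous right-hand side and bookkeeping the potentials correctly, the key point being that the truncation $V_{2}^{(k)}$ is discarded with the right sign precisely because $V_{2}^{(k)}\ge 0$. A secondary technical point—handled as in the proof of theorem~\ref{T:main-1}—is the passage from the pointwise inequality $(\star)$ to the form inequality $(\psi,v)_{Q^0_{V_1},r}\le 0$, which rests on the identifications, valid under (FC), of $Q^0_{V_1}$ with the restriction of $Q_{V_1}$ and of $L_{V_1}$ with $\mathcal{L}_{V_1}$ on $\dom(L_{V_1})$.
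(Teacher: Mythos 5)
Your argument is correct, but it takes a genuinely different route from the paper, which disposes of this lemma in one line: since $V_1+V_{2}^{(k)}\geq V_1$, the resolvent $\left(r^{-1}L_{V_1}+1\right)^{-1}$ dominates $\left(r^{-1}T_k+1\right)^{-1}$ by a direct appeal to lemma 3.12 of~\cite{MS-18} (with the identification of $T_k$ as the closure of $\mathcal{L}_{V_1+V_{2}^{(k)}}|_{\xcomp}$ supplied by lemma~\ref{L:2-1}). What you have done is essentially reprove that domination result from scratch using only tools already in the paper: Kato's inequality to pass from the inhomogeneous equation $\mathcal{L}_{V_1+V_{2}^{(k)}+r}\,u^{k}_r=ru$ to the pointwise bound $(\mathcal{L}_{V_1}+r)|u^{k}_r|\leq r|u|$ (your $(\star)$, where the truncation is indeed discarded with the correct sign because $V_{2}^{(k)}\geq 0$), and then the first Beurling--Deny condition, the positivity preservation of $(L_{V_1}+r)^{-1}$ (corollary~\ref{C:C-1}), and the positive form core property (proposition~\ref{P:P-2}) to upgrade $(\star)$ to $|u^{k}_r|\leq r(L_{V_1}+r)^{-1}|u|$ via the test function $(L_{V_1}+r)^{-1}\psi_{+}$. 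All the auxiliary identifications you invoke --- $L_{V_1}w=\mathcal{L}_{V_1}w$ pointwise on $\dom(L_{V_1})$, and the conversion of $((\mathcal{L}_{V_1}+r)\psi,v)_a$ into the form pairing $(\psi,v)_{Q^0_{V_1},r}$ for $v\in[\xcomp]^{+}$ --- are exactly the continuity-plus-Green's-formula steps the paper itself performs in the proof of theorem~\ref{T:main-1}, so they are sound at the paper's own level of rigor. The trade-off: the paper's proof is a one-line citation that outsources the substance to~\cite{MS-18}, while yours is self-contained within the paper (and, as a small bonus, needs only the operator-sum definition $T_k=L_{V_1}+M_{V_{2}^{(k)}}$ rather than the Kato--Rellich identification), at the cost of in effect reproving the quoted domination lemma in this special case.
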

\begin{proof} In view of the hypothesis~(\ref{E:l-v-1}) with $C=0$ we have $1>-\lambda_0(L_{V_1})$; hence the expression $\left(r^{-1}L_{V_1}+1\right)^{-1}|u|$ makes sense. Using the inequality $V_1+V_{2}^{(k)}\geq V_1$, it follows (see lemma 3.12 in~\cite{MS-18}) that the resolvent $\left(r^{-1}L_{V_1}+1\right)^{-1}$ dominates the resolvent $\left(r^{-1}T_k+1\right)^{-1}$:
\begin{equation*}
|u^{k}_r|:=\left|\left(r^{-1}T_k+1\right)^{-1}u\right|\leq \left(r^{-1}L_{V_1}+1\right)^{-1}|u|.
\end{equation*}
\end{proof}

\begin{lem}\label{L:2-4} Assume that the hypotheses of theorem~\ref{T:main-2} are satisfied. Let $u\in\dom(L_{V})$ and let $u^{k}_r$, $k,r\in \ZZ_{+}$, be as in~(\ref{u-k-r-t-k}).  Then, for fixed $r\in\ZZ_{+}$, we have the following convergence relations in $\lw$, as $k\to\infty$:
\begin{equation*}
u^{k}_r\to \left(r^{-1}L_{V}+1\right)^{-1}u,\qquad L_{V}u^{k}_r \to L_{V}\left(r^{-1}L_{V}+1\right)^{-1}u.
\end{equation*}
\end{lem}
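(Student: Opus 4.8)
The plan is to prove both convergences at once by showing that $\{u^{k}_{r}\}_{k}$ is a Cauchy sequence in $\lw$ — using the resolvent identity together with the pointwise domination from Lemma~\ref{L:2-2} — and then identifying its limit (and deducing convergence of $L_{V}u^{k}_{r}$) from the closedness of $L_{V}$ and the explicit formula for $L_{V}u^{k}_{r}$ furnished by Lemma~\ref{L:2-3}.

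\emph{Preliminaries.} Since (\ref{E:l-v-1}) is assumed with $C=0$, both $L_{V_1}$ and $T_{k}=L_{V_1}+M_{V_{2}^{(k)}}$ are non-negative, so $r^{-1}T_{l}+1\geq 1$ and $\|(r^{-1}T_{l}+1)^{-1}\|\leq 1$ for all $l,r\in\ZZ_{+}$; likewise the form $h_{1}+q_{V_2}$ of $L_{V}$ is non-negative, so $(r^{-1}L_{V}+1)^{-1}$ is a well-defined contraction. Put $\psi:=(r^{-1}L_{V_1}+1)^{-1}|u|\in\dom(L_{V_1})$. By Lemma~\ref{L:2-3}, $L_{V}w=L_{V_1}w+M_{V_2}w$ on $\dom(L_{V_1})$, which forces $\dom(L_{V_1})\subseteq\dom(M_{V_2})$, so $V_2\psi\in\lw$. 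Since $0\leq V_2-V_{2}^{(k)}=\max\{V_2-k,0\}\downarrow 0$ pointwise and $(V_2-V_{2}^{(k)})\psi\leq V_2\psi\in\lw$ pointwise, dominated convergence gives $\varepsilon_{k}:=\|(V_2-V_{2}^{(k)})\psi\|\to 0$ as $k\to\infty$.

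\emph{Key identity and Cauchy estimate.} For $u^{k}_{r}$ as in (\ref{u-k-r-t-k}) we have $u^{k}_{r}\in\dom(T_{k})=\dom(L_{V_1})\subseteq\dom(L_{V})$, and $(r^{-1}T_{k}+1)u^{k}_{r}=u$ gives $T_{k}u^{k}_{r}=r(u-u^{k}_{r})$; combining this with Lemma~\ref{L:2-3} and $T_{k}=L_{V_1}+M_{V_{2}^{(k)}}$,
\begin{equation*}
L_{V}u^{k}_{r}=L_{V_1}u^{k}_{r}+M_{V_2}u^{k}_{r}=T_{k}u^{k}_{r}+(V_2-V_{2}^{(k)})u^{k}_{r}=r(u-u^{k}_{r})+(V_2-V_{2}^{(k)})u^{k}_{r}.
\end{equation*}
By Lemma~\ref{L:2-2}, $|u^{k}_{r}|\leq\psi$ pointwise, so $\|(V_2-V_{2}^{(k)})u^{k}_{r}\|\leq\varepsilon_{k}$. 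For $l>k$, subtracting $(r^{-1}T_{l}+1)u^{l}_{r}=u$ from $(r^{-1}T_{l}+1)u^{k}_{r}=u+r^{-1}(T_{l}-T_{k})u^{k}_{r}$ yields $(r^{-1}T_{l}+1)(u^{k}_{r}-u^{l}_{r})=r^{-1}(V_{2}^{(l)}-V_{2}^{(k)})u^{k}_{r}$; using $0\leq V_{2}^{(l)}-V_{2}^{(k)}\leq V_2-V_{2}^{(k)}$, $|u^{k}_{r}|\leq\psi$, and $\|(r^{-1}T_{l}+1)^{-1}\|\leq 1$, we get $\|u^{k}_{r}-u^{l}_{r}\|\leq r^{-1}\varepsilon_{k}$. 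Hence $\{u^{k}_{r}\}_{k}$ is Cauchy in $\lw$; write $u^{k}_{r}\to\widetilde{u}_{r}$.

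\emph{Identification of the limit.} From the key identity and $\|(V_2-V_{2}^{(k)})u^{k}_{r}\|\leq\varepsilon_{k}\to 0$ we obtain $L_{V}u^{k}_{r}\to r(u-\widetilde{u}_{r})$ in $\lw$. As $L_{V}$ is closed, $\widetilde{u}_{r}\in\dom(L_{V})$ and $L_{V}\widetilde{u}_{r}=r(u-\widetilde{u}_{r})$, i.e.\ $(r^{-1}L_{V}+1)\widetilde{u}_{r}=u$, so $\widetilde{u}_{r}=(r^{-1}L_{V}+1)^{-1}u$; this establishes both convergences. The one point requiring genuine care is the uniformity in $l$ of the Cauchy estimate, which is exactly what the pointwise domination of Lemma~\ref{L:2-2} delivers once it is fed the $\lw$-statement $\varepsilon_{k}\to 0$; the self-adjointness of $T_{k}$ needed for (\ref{u-k-r-t-k}) to be meaningful is already provided by Lemma~\ref{L:2-1}.
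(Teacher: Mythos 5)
Your proof is correct, and it shares the paper's key ingredients --- the operator identity $L_{V}u^{k}_{r}=T_{k}u^{k}_{r}+(V_2-V_{2}^{(k)})u^{k}_{r}$ coming from lemma~\ref{L:2-3}, the pointwise domination $|u^{k}_{r}|\leq(r^{-1}L_{V_1}+1)^{-1}|u|$ from lemma~\ref{L:2-2}, and dominated convergence against the $\lw$-majorant $V_2(r^{-1}L_{V_1}+1)^{-1}|u|$ --- but it routes the conclusion differently. The paper applies $(r^{-1}L_{V}+1)^{-1}$ to the identity $(r^{-1}L_{V}+1)u^{k}_{r}=u+r^{-1}(V_2-V_{2}^{(k)})u^{k}_{r}$ to get the exact formula $u^{k}_{r}-(r^{-1}L_{V}+1)^{-1}u=r^{-1}(r^{-1}L_{V}+1)^{-1}(V_2-V_{2}^{(k)})u^{k}_{r}$, so the limit is identified from the outset and one only needs boundedness of the resolvent of $L_{V}$. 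You instead compare $u^{k}_{r}$ with $u^{l}_{r}$ through the resolvent of $T_{l}$, obtain a Cauchy estimate $\|u^{k}_{r}-u^{l}_{r}\|\leq r^{-1}\varepsilon_{k}$ uniform in $l$, and then recover the limit a posteriori from the closedness of $L_{V}$; this buys you independence from any a priori formula involving $(r^{-1}L_{V}+1)^{-1}$, at the price of an extra identification step. Both arguments are of comparable length and rest on the same two lemmas, so this is a variation in bookkeeping rather than in substance. Two small remarks: your deduction $\dom(L_{V_1})\subseteq\dom(M_{V_2})$ from lemma~\ref{L:2-3} is legitimate (the paper gets the same fact from~(\ref{E:a-1-2}) and~(\ref{temp-closure})); and the non-negativity of $L_{V}$ should be read off from the form $Q^{0}_{V}$ (the closure of $Q^{c}_{V}\geq 0$) rather than from $h_{1}+q_{V_2}$, since the identification of those two forms is precisely theorem~\ref{T:main-1} and need not be invoked here --- but this does not affect the argument.
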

\begin{proof} Throughout this proof, $r\in\ZZ_{+}$ is fixed. Remembering that $u^{k}_r\in\dom(T_k)=\dom(L_{V_1}))$ and taking into account lemma~\ref{L:2-3}, we see that
\begin{equation}\label{temp-c-1}
L_{V}u^{k}_r=L_{V_1}u^{k}_r+M_{V_2}u^{k}_r,
\end{equation}
which leads to
\begin{equation}\label{temp-c-2}
u^{k}_r-\left(r^{-1}L_{V}+1\right)^{-1}u=r^{-1}\left(r^{-1}L_{V}+1\right)^{-1}(V_2-V_{2}^{(k)})u^{k}_r.
\end{equation}

The operator $\left(r^{-1}L_{V}+1\right)^{-1}$ is bounded (and linear) in $\lw$. By the definition of $V_{2}^{(k)}$ we have the pointwise convergence $(V_2-V_{2}^{(k)})u^{k}_r\to 0$, as $k\to\infty$. Thus, the dominated convergence theorem will grant the convergence to $0$ of the right hand side of~(\ref{temp-c-2}) in $\lw$  after we show that the sequence $\{(V_2-V_{2}^{(k)})u^{k}_r\}_{k\in\ZZ_+}$ is majorized by a function in $\lw$. To this end, using lemma~\ref{L:2-2} we estimate
\begin{equation*}
|(V_2-V_{2}^{(k)})u^{k}_r|\leq (V_2-V_{2}^{(k)})\left(r^{-1}L_{V_1}+1\right)^{-1}|u|\leq V_2\left(r^{-1}L_{V_1}+1\right)^{-1}|u|,
\end{equation*}
where the last inequality follows because $V_2-V_{2}^{(k)}\leq V_2$.

Using the the hypothesis~(\ref{E:a-1-2}) and property~(\ref{temp-closure}), it is easily seen that function $V_2\left(r^{-1}L_{V_1}+1\right)^{-1}|u|$ belongs to $\lw$. This concludes the proof of the first convergence relation.

To establish the second convergence relation, we look at~(\ref{temp-c-1}) and write
\begin{equation*}
L_{V}u^{k}_r=L_{V_1}u^{k}_r+M_{V_{2}^{(k)}}u^{k}_r+(V_2-V_{2}^{(k)})u^{k}_r=T_ku^{k}_r+(V_2-V_{2}^{(k)})u^{k}_r,
\end{equation*}
where $T_k$ is as in lemma~\ref{L:2-1}. Using the convergence $(V_2-V_{2}^{(k)})u^{k}_r\to 0$ in $\lw$ as $k\to\infty$, it remains to show
\begin{equation}\label{c-rel-final}
T_ku^{k}_r\to L_{V}\left(r^{-1}L_{V}+1\right)^{-1}u,\quad \textrm{in }\lw.
\end{equation}
The convergence relation~(\ref{c-rel-final}) follows by combining
\begin{equation*}
L_{V}\left(r^{-1}L_{V}+1\right)^{-1}u=ru-r\left(r^{-1}L_{V}+1\right)^{-1}u
\end{equation*}
and
\begin{equation*}
T_ku^{k}_r=ru-ru^{k}_r\to ru-r\left(r^{-1}L_{V}+1\right)^{-1}u,
\end{equation*}
where we used the first convergence relation from the statement of this lemma.
\end{proof}
\begin{lem}\label{L:2-5} Assume that the hypotheses of theorem~\ref{T:main-2} are satisfied. Let $u\in\dom(L_{V})$ and let $r\in \ZZ_{+}$. Then, we have the following convergence relations in $\lw$, as $r\to\infty$:
\begin{equation*}
\left(r^{-1}L_{V}+1\right)^{-1}u\to u,\qquad L_{V}\left(r^{-1}L_{V}+1\right)^{-1}u\to L_{V}u.
\end{equation*}
\end{lem}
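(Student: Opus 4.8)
The plan is to recognize both convergence relations as standard statements about the functional calculus of the non-negative self-adjoint operator $L_V$ (which is non-negative here because $V=V_1+V_2$ with $V_1$ satisfying~(\ref{E:l-v-1}) with $C=0$ and $V_2\geq 0$, so that $1>-\lambda_0(L_V)$ and the resolvents $(r^{-1}L_V+1)^{-1}$ are well defined and uniformly bounded by $1$ in operator norm). For the first relation, write $(r^{-1}L_V+1)^{-1}-I = \phi_r(L_V)$ where $\phi_r(\lambda):=\bigl(r^{-1}\lambda+1\bigr)^{-1}-1 = -\dfrac{r^{-1}\lambda}{r^{-1}\lambda+1}$; on $[0,\infty)$ one has $|\phi_r(\lambda)|\leq 1$ uniformly in $r$ and $\phi_r(\lambda)\to 0$ pointwise as $r\to\infty$. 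By the spectral theorem (dominated convergence against the spectral measure $d\langle E_\lambda u,u\rangle$), $\|\phi_r(L_V)u\|^2=\int_{[0,\infty)}|\phi_r(\lambda)|^2\,d\langle E_\lambda u,u\rangle\to 0$, which is exactly $(r^{-1}L_V+1)^{-1}u\to u$.

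For the second relation, fix $u\in\dom(L_V)$ and set $g:=L_Vu\in\lw$. Since $L_V$ commutes with its own resolvent, $L_V(r^{-1}L_V+1)^{-1}u=(r^{-1}L_V+1)^{-1}L_Vu=(r^{-1}L_V+1)^{-1}g$, and applying the first relation with $g$ in place of $u$ gives $(r^{-1}L_V+1)^{-1}g\to g=L_Vu$ in $\lw$ as $r\to\infty$. This disposes of the second convergence and completes the proof. (Alternatively, one can write $L_V(r^{-1}L_V+1)^{-1}u = ru - r(r^{-1}L_V+1)^{-1}u$ and invoke the identity $r\bigl[u-(r^{-1}L_V+1)^{-1}u\bigr]=(r^{-1}L_V+1)^{-1}L_Vu$, reducing again to the first relation applied to $L_Vu$; this is in fact the same computation used in the proof of Lemma~\ref{L:2-4}, so it dovetails with the surrounding argument.)

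There is essentially no obstacle here: the only point requiring a word of care is the justification that $u\in\dom(L_V)$ is needed for the second relation (so that $L_Vu$ is a genuine vector in $\lw$ to which the first relation can be applied), whereas the first relation holds for every $u\in\lw$; and that the uniform bound $\|(r^{-1}L_V+1)^{-1}\|\leq 1$ together with the pointwise decay of the symbol is what makes the spectral-theorem dominated-convergence step legitimate. Everything else is a routine invocation of the functional calculus.
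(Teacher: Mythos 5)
Your proof is correct, but for the first convergence relation you take a different route from the paper. The paper reuses its Proposition~\ref{P:kato-8} (Kato's monotone form convergence theorem): it sets $t_r:=r^{-1}Q_V$ and observes that these closed non-negative forms decrease pointwise to the zero form $t\equiv 0$ on the core $\xcomp$, whence $(r^{-1}L_V+1)^{-1}v\to (0+1)^{-1}v=v$ by strong resolvent convergence. You instead argue directly via the spectral theorem, writing $(r^{-1}L_V+1)^{-1}-I=\phi_r(L_V)$ with $|\phi_r|\leq 1$ on $[0,\infty)$ and $\phi_r\to 0$ pointwise, and applying dominated convergence against the spectral measure; this is more elementary and self-contained, at the cost of not dovetailing with the machinery the paper has already set up (Proposition~\ref{P:kato-8} is also used in Step~1 of Proposition~\ref{P:P-2}, so the paper's choice keeps the exposition uniform). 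For the second relation the two arguments coincide: both commute $L_V$ with its resolvent (the paper cites problem III.6.2 of Kato for this) and apply the first relation to $L_Vu\in\lw$. Your remarks on why $u\in\dom(L_V)$ is needed only for the second relation, and on the uniform bound $\|(r^{-1}L_V+1)^{-1}\|\leq 1$ coming from $L_V\geq 0$ (which holds since~(\ref{E:l-v-1}) is normalized to $C=0$ and $V_2\geq 0$), are accurate.
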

\begin{proof} Let $r^{-1}Q_{V}$ be the (closed, non-negative) form associated to the (self-adjoint non-negative) operator $r^{-1}L_{V}$. Then the sequence of forms $t_{r}:=r^{-1}Q_{V}$, $r\in\ZZ_+$, and the zero-form $t\equiv 0$ satisfy the hypotheses of proposition~\ref{P:kato-8} with $\mathscr{D}=\xcomp$, which establishes the first convergence relation. As $u\in\dom(L_V)$, we can use the (abstract) result of the problem III.6.2 in~\cite{Kato80} to infer
\begin{equation*}
L_{V}\left(r^{-1}L_{V}+1\right)^{-1}u=\left(r^{-1}L_{V}+1\right)^{-1}L_{V}u.
\end{equation*}
Now the second convergence relation follows from proposition~\ref{P:kato-8}.
\end{proof}

\noindent\textbf{Proof of Proposition~\ref{P:2-1}} It remains to show property (ii). Let $u\in\dom(L_{V})$. By lemma~\ref{L:2-4}, for every $r\in\ZZ_{+}$, we can select $k(r)\in\ZZ_{+}$ such that
\begin{equation*}
\|u^{k(r)}_r-\left(r^{-1}L_{V}+1\right)^{-1}u\|\leq r^{-1},\quad \|L_{V}u^{k(r)}_r \to L_{V}\left(r^{-1}L_{V}+1\right)^{-1}u\|\leq r^{-1}.
\end{equation*}
Defining $w_{r}:=u^{k(r)}_r$, we see that $w_r\in\dom (T_{k(r)})=\dom (L_{V_1})$. Combining the last two estimates with lemma~\ref{L:2-5}, we get $w_r\to u$ and $L_{V}w_r\to L_{V}u$ in $\lw$, as $r\to\infty$. $\hfill\square$

\medskip

\noindent\textbf{End of the Proof of Theorem~\ref{T:main-2}} Since $L_{V}$ is an extension of $\overline{\mathcal{L}_{V}|_{\xcomp}}$, it is enough to show that $\dom (L_{V})\subseteq \dom\left(\overline{\mathcal{L}_{V}|_{\xcomp}}\right)$. Let $u\in \dom (L_V)$ and let $w_j\in\dom (L_{V_1})$ be a sequence as in part (ii) of proposition~\ref{P:2-1}. By lemma~\ref{L:2-3} we have $w_j\in\dom (\overline{\mathcal{L}_{V}|_{\xcomp}})$. As $\overline{\mathcal{L}_{V}|_{\xcomp}}$ is a closed operator, we infer that $u\in \dom\left(\overline{\mathcal{L}_{V}|_{\xcomp}}\right)$. $\hfill\square$

\vskip 0.25in

\noindent\textbf{Data Availability} Data sharing is not applicable to this article as no data sets
were generated or analyzed during this study.

\vskip 0.25in

\noindent\textbf{Declarations}

\vskip 0.25in

\noindent\textbf{Conflict of Interest} The author has no conflict of interest to declare that
are relevant to the content of this article.

\end{document}